\newtheorem{theorem}{Theorem}[section]   % Numbered within each section
\newtheorem{proposition}[theorem]{Proposition}  % Numbered along with theorem
\newenvironment{remark}{{\bf Remark.\ }\rm}{\bigskip}  
\newenvironment{example}{\bf Example. \rm}{\bigskip}
\newenvironment{definition}{\bf Definition.\ \rm}{\bigskip}
\newenvironment{notation}{{\bf Notation.\ }\rm}{\bigskip}
\renewenvironment{abstract}{\begin{quote}{\bf Abstract.\ }\small}{\end{quote}\bigskip}
\def\arcsinh{{\rm arcsinh}\,}
\def\sech{{\rm sech}\,}
\def\arctanh{{\rm arctanh}\,}
\title{Canonical Polynomial Sequences: Inverse Pairs}
\author{Philip Feinsilver}
\address{Department of Mathematics\\
Southern Illinois University\\
Carbondale, IL. 62901, U.S.A.}
\begin{document}

\maketitle

\thispagestyle{empty}

\begin{abstract}    
For $V(z)$, analytic in a neighborhood of $0\in\mathbb{C}$, $V(0) = 0$, $V'(0)\ne0$,
there is an associated sequence of polynomials, \textsl{canonical polynomials},
that is a generalized Appell sequence with lowering operator $V(d/dx)$.  
Correspondingly, the inverse function $Z(v)$ has an associated canonical polynomial sequence.  
The coefficients of the two sequences form two mutually inverse infinite matrices.   
We  detail  the  operator  calculus  for  the  pair  of  systems,  illustrating  the  duality  of  operators  and  variables between them.  
Examples are presented including a connection between Gegenbauer and Bessel polynomials. Touchard polynomials appear as well.  
Alternative settings would include an umbral calculus approach as well as the Riordan group.
Here we use the approach through operator calculus.
\end{abstract}

{\bf Keywords: }{Riordan group, special functions, orthogonal polynomials, operator calculus, Heisenberg-Weyl algebra, Bessel operator}\\

{\bf Subject Classification: }{Primary 33C45, 05A40; Secondary 42C05, 17B30}

%%% Introduction
\begin{section}{Introduction}

Our goal is to study certain families of polynomials, \textsl{canonical polynomial systems}, which
are a type of generalized Appell systems. The term \textsl{canonical} comes from the phrase
``canonical commutation relations" referring, in our case, to operators acting on polynomials satisfying
the boson commutation relation(s), i.e., operators whose commutator is the identity. We will present these in explicit form.\\

Acting on polynomials in $x$, define the operators
$$D=\frac{d}{dx} \qquad \text{and}\qquad x=\hbox{``multiplication by }x \hbox{"}$$ 
the symbol $x$ playing the dual r\^ole of variable and multiplication operator.\\

They satisfy commutation relations $[D,x]=I$, where $I$, the identity
operator, commutes with both $D$ and $x$. Abstractly, the Heisenberg-Weyl algebra is the associative
algebra generated by operators $\{A,B,C\}$ satisfying the commutation relations
$$[A,B]=C\,,\quad [A,C]=[B,C]=0\;.$$
The {\sl standard\/} HW algebra is the one generated by the realization $\{D,x,I\}$. So in this work, we are
studying particular representations of the HW algebra acting on spaces of polynomials.\\

A system of polynomials $\{p_n(x)\}_{n\ge0}$ is an {\sl Appell system\/} if it is a basis for 
a representation of the standard HW algebra with the following properties:
\begin{enumerate}  
\item $p_n$ is of degree $n$ in $x$
\item $D\,p_n=n\,p_{n-1}$.
\end{enumerate}

Such sequences of polynomials are canonical polynomial systems in the sense
that they provide a polynomial basis for a representation of the 
Heisenberg-Weyl algebra, in realizations different from the
standard one. Our approach is to  use operator calculus methods for studying such systems.\\

More specifically, take an analytic function $V(z)$ defined in a neighborhood of 0 in ${\textbf C}$,
with $V(0)=0$, $V'(0) \ne 0$. Denote $W(z)=1/V'(z)$ and $Z(v)$ the inverse function, i.e.,
$V(Z(v))=v$, $Z(V(z))=z$. \\

Now, $V(D)$ is defined by power series as an operator on polynomials
in $x$. We have the commutation relations (see below, Proposition \ref{prop:CR})
$$[V(D),X]=V'(D)\,,\qquad [V(D),xW(D)]=I\;.$$ 
In other words, $V=V(D)$ and $Y=xW(D)$ 
generate a representation of the HW algebra on polynomials in $x$. The basis for the
representation is $p_n(x)=Y^n1$. The constant function $1=p_0$ is referred to as a ``vacuum state", as
it is annihilated by the lowering operator $V$: $V(D)1=V(0)=0$. \\

We generate the polynomial basis recursively:
%%% Raising/Lowering
\begin{equation}\label{eq:rec1}
 p_{n+1} =  Yp_n\,,\qquad V(D)p_n=n\,p_{n-1}
\end{equation}

That is, $Y$ is the {\sl raising operator\/} and $V(D)$ is the corresponding {\sl lowering operator\/}.  
Thus, $\{p_n\}_{n\ge0}$ form a system of {\sl canonical polynomials\/}.
The operator of multiplication by $x$ is given by 
$$x=YV'(D)=YZ'(V)^{-1}$$ 
which is a {\sl recursion operator\/} for the system. \\

Our approach holds as well for multivariate systems, some examples of which will appear in the
final sections of this work.\\

Primarily this work is a source providing useful examples of such sequences with
details about the operators/representations involved. Many well-known sequences of polynomials, or closely-related
families, will make their appearance.
\end{section}

%%% Dual vector fields and canonical polynomial sequences %%%
\begin{section}{Dual vector fields and canonical polynomial sequences}

Identifying vector fields with first-order partial differential operators,
consider a variable $A$ with corresponding partial differential operator $\partial_A$.
Given $V$ as above, let $\tilde Y$ be the vector field $\tilde Y=W(A)\,\partial_A$.
An operator function of $D$ acts on $e^{Ax}$
as multiplication by the corresponding function in the variable $A$.\\

The basic fact is the following primitive version of the Fourier transform:
\begin{equation}\label{eq:FT}
 x W(D)\, e^{A x}=W(A) \partial_A\, e^{A x}=x W(A)\, e^{A x}
\end{equation}
where $D$ denotes $d/dx$, and $W$ is a function analytic in a neighborhood of $0\in\mathbb{C}$.\\

What this does is exchange the operators in the $(x,D)$ variables with corresponding
operators in $(A,\partial_A)$ variables. Thus each vector field has its dual and vice versa. That is,
$xW(D)$ is the \textsl{dual vector field} corresponding to the vector field $W(A)\partial_A$.
We abbreviate ``dual vector field" by ``dvf".\\

When discussing functions of $D$, we use the variable $z$ for complex
variables defining the function involved. For an operator $f(D)$, the function $f(z)$ may be
referred to as its \textsl{symbol}.
We have the commutation relation, $Dx-xD=I$, $x$ playing the dual r\^ole of variable
and operator of multiplication by $x$. When there is no ambiguity of meaning, we write as well $[z,x]=I$.\\

We have the ``canonical function" $V(z)$ holomorphic in a neighborhood of $0 \in \mathbb{C}$, with $V(0)=0$, and the derivative $V'(0)\ne0$.
And the functional inverse of $V$, denoted by $Z(v)$.\\

The associated dvf is called a ``canonical variable" and is given by
$$Y=xW(D)$$ 
where $W(z)=V'(z)^{-1}$ is the reciprocal of the derivative $V'(z)$. \\

We remark the facilitating feature of this operator calculus that any function of $D$ expressible as a series expansion, perhaps formal,
acts finitely on polynomials. Defining the action on exponentials as evaluation gives a well-defined calculus acting on polynomials, exponentials, and
linear combinations of exponentials with polynomial coefficients. Cf. \cite{FS,RKO}.

\begin{proposition} \label{prop:CR} The commutation relation
$$[V(D),x]=V'(D)$$
the derivative of $V$.
\end{proposition}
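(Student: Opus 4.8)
The plan is to reduce the claim to the single relation $[D,x]=I$ and then propagate it through the power series defining $V$. The key observation is that the map $P\mapsto[P,x]$ is a derivation on the associative algebra of operators: for any two operators $P,Q$ one has $[PQ,x]=P[Q,x]+[P,x]Q$, which follows by expanding both sides ($PQx-PxQ+PxQ-xPQ=PQx-xPQ$). Applying this repeatedly, by induction on $n$, to $P=Q=\cdots=D$ yields the fundamental identity
$$[D^n,x]=n\,D^{n-1},\qquad n\ge1,$$
with base case $n=1$ being exactly $[D,x]=I$ and inductive step $[D^n,x]=D[D^{n-1},x]+[D,x]D^{n-1}=D\,(n-1)D^{n-2}+D^{n-1}=n\,D^{n-1}$.

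Having this, I would expand $V(z)=\sum_{k\ge1}c_k z^k$ (using $V(0)=0$), so that $V(D)=\sum_{k\ge1}c_k D^k$ as an operator on polynomials. Since the bracket $[\,\cdot\,,x]$ is linear, term-by-term application of the fundamental identity gives
$$[V(D),x]=\sum_{k\ge1}c_k\,[D^k,x]=\sum_{k\ge1}c_k\,k\,D^{k-1}=V'(D),$$
the last equality being nothing more than differentiation of the symbol $V(z)=\sum c_k z^k$ to $V'(z)=\sum k c_k z^{k-1}$, evaluated at $D$.

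The only point requiring care is the legitimacy of the term-by-term manipulation of what may be a formal or merely locally convergent series. This is precisely the facilitating feature of the operator calculus remarked on above: acting on any fixed polynomial, each $D^k$ annihilates all but finitely many basis monomials, so every sum appearing truncates to a finite one upon evaluation and no analytic convergence question arises. Thus the equality $[V(D),x]=V'(D)$ holds as an identity of operators on the space of polynomials, which is the assertion. An essentially equivalent and more conceptual route would be to note that the derivation $P\mapsto[P,x]$ sends $D\mapsto I$ and $I\mapsto0$, hence agrees with $\partial/\partial D$ on all functions of $D$, giving $[V(D),x]=V'(D)$ directly; I expect the mild bookkeeping of the finite-truncation justification, rather than any genuine difficulty, to be the main thing to state cleanly.
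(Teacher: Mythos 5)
Your proof is correct, but it takes a different route from the paper. You argue purely algebraically: the map $P\mapsto[P,x]$ is a derivation, induction gives $[D^n,x]=n\,D^{n-1}$, and linearity plus the finite action of each $D^k$ on any fixed polynomial lets you differentiate the symbol term by term to get $[V(D),x]=V'(D)$. The paper instead works through its ``primitive Fourier transform'' (equation \eqref{eq:FT}): it applies $V(D)x$ to the exponential $e^{Ax}$, where $x$ becomes $\partial_A$ and $V(D)$ becomes multiplication by $V(A)$, so the ordinary product rule $\partial_A V(A)e^{Ax}=V'(A)e^{Ax}+V(A)x\,e^{Ax}$ yields the commutator identity on exponentials; repeated differentiation with respect to $A$ (at $A=0$, producing the monomials $x^n e^{Ax}\mapsto x^n$) then transfers the identity to polynomials. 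The two arguments buy different things. Yours is self-contained within the Weyl algebra, needs no analytic input whatsoever, and makes transparent that the result is formal --- the derivation $[\,\cdot\,,x]$ coincides with $\partial/\partial D$ on functions of $D$, which is exactly the conceptual content. The paper's proof is less elementary in isolation but is the natural move in its framework: the same exponential calculus is reused throughout (the flow computation, the Main Formula, the generating functions), and proving the commutation relation on $e^{Ax}$ establishes it simultaneously on the larger class of exponentials and polynomial-times-exponential functions on which the paper's operator calculus acts, not just on polynomials. Your finite-truncation remark correctly disposes of the only delicate point in the series approach, so the proposal stands as a complete alternative proof.
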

\begin{proof} We use the approach of equation\eqref{eq:FT} and the usual product rule:
\begin{align*}
V(D)x\,e^{Ax}&=V(D)\partial_A\,e^{Ax}=\partial_A V(A)\,e^{Ax}\\ 
&=V'(A)\,e^{Ax}+V(A)x\,e^{Ax}=V'(D)\,e^{Ax}+xV(D)\,e^{Ax}
\end{align*}
so that
$$V(D)x-xV(D)=V'(D)$$
acting on $e^{Ax}$. Then differentiating repeatedly with respect to $A$ yields the result for polynomials as well.
\end{proof}

\begin{proposition}
The commutation relations
$$[V(D),Y]=I$$
hold.
\end{proposition}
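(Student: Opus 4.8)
The plan is to reduce the claim directly to Proposition~\ref{prop:CR}, exploiting the fact that $Y=xW(D)$ is the product of the multiplication operator $x$ with a function of $D$ alone. Since $V(D)$ and $W(D)$ are both power series in the single operator $D$, they commute; this is the one structural fact I will lean on, together with the derivation property of the commutator bracket, $[V(D),BC]=[V(D),B]\,C+B\,[V(D),C]$.

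First I would expand
$$[V(D),Y]=[V(D),xW(D)]=[V(D),x]\,W(D)+x\,[V(D),W(D)].$$
The second term vanishes because $[V(D),W(D)]=0$: both operators are functions of $D$, hence commute. For the first term, Proposition~\ref{prop:CR} supplies $[V(D),x]=V'(D)$, leaving $V'(D)\,W(D)$.

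It then remains to observe that $V'(D)\,W(D)=I$. This is immediate from the definition $W(z)=V'(z)^{-1}$: since $V'(0)\ne0$, the reciprocal $W$ is analytic in a neighborhood of $0$, the product of symbols satisfies $V'(z)W(z)\equiv1$ identically as a power series, and passing to operators in $D$ gives $V'(D)W(D)=I$. Hence $[V(D),Y]=I$.

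I do not anticipate a genuine obstacle here; the only points requiring care are bookkeeping ones. The first is justifying that $W(D)$ is a legitimate operator on polynomials, which is guaranteed by the analyticity of $W$ near $0$ together with the remark that any such series acts finitely on polynomials. The second is confirming that the purely algebraic derivation identity for the bracket is applied correctly, so that the vanishing second term and the surviving factor $V'(D)W(D)$ are separated cleanly. Alternatively, one could verify the relation first on the exponentials $e^{Ax}$ via equation~\eqref{eq:FT}, as in the proof of Proposition~\ref{prop:CR}, and then extend to polynomials by differentiating in $A$; but the algebraic route above is the shorter one.
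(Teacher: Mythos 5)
Your proof is correct and follows essentially the same route as the paper: the paper's one-line proof $[V(D),xW(D)]=V'(D)W(D)=I$ silently uses exactly the derivation property of the bracket, the commutation $[V(D),W(D)]=0$, and Proposition~\ref{prop:CR}, all of which you simply make explicit. Your added remarks on the analyticity of $W$ near $0$ (since $V'(0)\ne0$) are sound bookkeeping but introduce nothing beyond the paper's argument.
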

\begin{proof}
By the above Proposition:
$$[V(D),xW(D)]=V'(D)W(D)=I$$
as required.
\end{proof}

\begin{notation} 
We denote the operator $V(D)$ by $\mathcal{V}$.
\end{notation}

 In the $(Y,\mathcal{V})$ calculus, parallel to the usual $(x,D)$ calculus, $Y$ takes the r\^ole of multiplication by $x$ and $\mathcal{V}$ the r\^ole of differentiation.\\

\begin{remark}
 Note that $W$ is essentially the derivative of the inverse function $Z(v)$. That is,
$$ Z(V(z))=z \Longrightarrow Z'(V(z))V'(z)=1 \Longrightarrow Z'(V(z))=W(z)$$
and substituting $z=Z(v)$,
$$W(Z(v))=Z'(v)$$
relations well-known from calculus.
\end{remark}

\begin{example}  For the \textsl{Poisson case}, we take $V(z)=e^z-1$, so that $W(z)=e^{-z}$ and
$Z(v)=\log(1+v)$. 
\end{example}

%%% Canonical polynomials %%%
\begin{subsection}{Canonical polynomials}
Given $V$, we define the corresponding \textsl{canonical polynomials}, $\{p_n(x)\}$ by
$$p_n(x)=(xW(D))\cdots(xW(D))1=Y^n1$$
so that $p_n$ satisfy the relations
\begin{align*}
Yp_n&=p_{n+1}\\
\mathcal{V}p_n&=np_{n-1}
\end{align*}
with $p_0=1$, $\mathcal{V}p_0=0$. Because of this property, $\mathcal{V}$ is called the lowering operator or velocity operator.\\

\begin{definition}     
Given a canonical polynomial sequence we write
$$p_n(x)=\sum_k p_{nk} x^k$$
\end{definition}

We call the infinite matrix $\{p_{nk}\}$, the \textsl{matrix of coefficients} or simply refer to the coefficients of the polynomial sequence.
We can refer as well to the $P$-matrix of the system. \\

Observe (e.g., setting $x=0$ in the generating function) that $p_n(0)=0$ for all $n>0$.\\

\begin{remark} 
One can work directly with the coefficients $\{p_{nk}\}$, as infinite matrices, as basic objects of study. Cf. \cite{SGWW,V1,V2}.
\end{remark}

\begin{example}For the Poisson case, $W(z)=e^{-z}$ acts on functions of $x$ as a shift operator
$$W(D)\,f(x)=f(x-1)$$ 
so $Yf(x)=xf(x-1)$ and iterating, starting from $p_0=1$, yields
$$p_n(x)=x(x-1)\cdots(x-(n-1))=x^{(n)}$$
falling factorial polynomial. One checks directly that $\mathcal{V}=e^D-1$ acts accordingly.
\end{example}

We continue by looking at several basic examples and structures associated to sequences of canonical polynomials.
\end{subsection}

%%% Flow of a dual vector field %%%
\begin{subsection}{Flow of a dual vector field}
We will now derive the exponential generating function for the sequence $\{p_n\}$. From the connection between vector fields and
dvf's, observe that we are interested in the flow of the dual vector field.\\

We would like to calculate the solution to
$$\frac{\partial u}{\partial t}=Yu,\qquad\qquad u(0)=f(x)$$
for the dvf $Y=x W(D)$.\\

Let $\tilde Y=W(A)\partial_A$ denote the corresponding vector field. Using equation \eqref{eq:FT}, we have
$$e^{tY}e^{A x}=e^{t\tilde Y}e^{A x}$$
by, say, applying $Y$ iteratively on the left side, and on the right side moving $Y$ past powers of $\tilde Y$ and converting to $\tilde Y$ when
acting on $e^{Ax}$. \\

We know how to calculate the flow of the vector field $\tilde Y$ by the characteristic equations
$$\dot A=W(A)$$
Multiplying both sides by $V'(A)$ we get
$$\dot A\,V'(A)=1$$
Now the left-hand side is an exact derivative. I.e.,
$$\frac{d}{dt}\,V(A(t))=1$$
Integrating, with initial conditions $A(0)=A$, we get
$$V(A(t))=V(A)+t$$
Solving, we have
$$A(t)=Z(t+V(A))$$

Thus,
\begin{equation}\label{eq:flow}
e^{tY}e^{A x}=e^{t\tilde Y}e^{A x}=e^{x Z(t+V(A))}
\end{equation}

As a corollary we have the action of the dvf $Y$ on the vacuum function equal
to 1 by setting $A=0$:
$$e^{tY}1=e^{x Z(t)}$$
In our context, replacing $t$ by $v$ we have

\begin{theorem}{\rm Main Formula}\\
For a canonical function $V(z)$, with $W(z)=1/V'(z)$, $Z(V(z))=z$, let $Y=xW(D)$ be the associated
canonical variable. Then we have
$$ e^{vY}e^{Ax}=e^{xZ(v+V(A))}$$
And, in particular,
$$ e^{vY}1=e^{xZ(v)}\ .$$
\end{theorem}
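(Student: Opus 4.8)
The plan is to exploit the duality recorded in equation \eqref{eq:FT}, which converts the dual vector field $Y=xW(D)$, acting in the $x$ variable, into the genuine first-order vector field $\tilde Y=W(A)\partial_A$, acting in the $A$ variable, whenever either is applied to $e^{Ax}$. The strategy is therefore to compute the flow $e^{v\tilde Y}e^{Ax}$ of an ordinary vector field, which is elementary, and then transport the answer back to the $Y$-side.

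First I would establish that $e^{vY}e^{Ax}=e^{v\tilde Y}e^{Ax}$. At the level of single applications this is exactly \eqref{eq:FT}; to promote it to the full exponential I would show $Y^n e^{Ax}=\tilde Y^n e^{Ax}$ for every $n$ by induction, using at each step that a function of $D$ commutes with the scalar $W(A)$ and that $De^{Ax}=Ae^{Ax}$ while $\partial_A e^{Ax}=xe^{Ax}$, so that the duality can be reapplied to the intermediate expression. Next I would compute the flow of $\tilde Y$ by the method of characteristics. The characteristic equation is $\dot A=W(A)$; multiplying by $V'(A)=1/W(A)$ turns the left side into the exact derivative $\frac{d}{dv}V(A(v))=1$, which integrates to $V(A(v))=V(A)+v$ and hence $A(v)=Z(v+V(A))$. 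Since the flow of a vector field transports a function by composition with its characteristic flow, $e^{v\tilde Y}e^{Ax}=e^{xA(v)}=e^{xZ(v+V(A))}$, which is the Main Formula; the vacuum case then follows by setting $A=0$ and using $V(0)=0$, so that $Z(v+V(0))=Z(v)$.

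The main obstacle is precisely the induction in the first step: justifying that the two flows literally agree, not merely that their first-order terms agree through \eqref{eq:FT}. A cleaner route that sidesteps the iterated-application bookkeeping is to verify directly that the candidate $u(v,x,A)=e^{xZ(v+V(A))}$ solves the evolution equation $\partial_v u=xW(D)u$ with $u(0,x,A)=e^{Ax}$. Indeed, applying $xW(D)$ to $e^{cx}$ with $c=Z(v+V(A))$ gives $xW(c)e^{cx}$, while $\partial_v u=xZ'(v+V(A))e^{cx}$, and these coincide because $Z'=W\circ Z$ by the Remark above; the initial condition holds since $Z(V(A))=A$. Uniqueness of the solution to this evolution equation then forces $e^{vY}e^{Ax}=u$, completing the proof.

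Either way the verification rests entirely on the identity $W=1/V'$ together with $Z'=W\circ Z$, so no estimate beyond the convergence of the relevant series on polynomials and exponentials is needed; the calculus described earlier guarantees that each operator acts in a well-defined manner on $e^{Ax}$ and its polynomial multiples.
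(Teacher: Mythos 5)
Your primary argument is exactly the paper's own proof: transfer the dual vector field $Y=xW(D)$ to the genuine vector field $\tilde Y=W(A)\partial_A$ via \eqref{eq:FT}, solve the characteristic equation $\dot A=W(A)$ by multiplying through by $V'(A)$ to get the exact derivative $\frac{d}{dv}V(A(v))=1$, integrate to $A(v)=Z(v+V(A))$, and set $A=0$ for the vacuum case. Where you go beyond the paper is in two respects, both to the good. First, you make explicit the induction $Y^ne^{Ax}=\tilde Y^ne^{Ax}$ that the paper only sketches (``applying $Y$ iteratively\ldots and moving $Y$ past powers of $\tilde Y$''); the clean way to phrase the inductive step is that $Y$ acts only on the $x$-dependence and $\tilde Y$ only on the $A$-dependence, so the two operators commute on functions of $(x,A)$, letting you convert the innermost application each time --- your formulation in terms of $De^{Ax}=Ae^{Ax}$ and $\partial_Ae^{Ax}=xe^{Ax}$ amounts to the same thing. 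Second, your alternative route --- verifying directly that $u=e^{xZ(v+V(A))}$ satisfies $\partial_v u=xW(D)u$ with $u(0)=e^{Ax}$, using $W(Z(w))=Z'(w)$ from the Remark, and invoking uniqueness of the (formal power series in $v$) solution, whose coefficients are forced to be $u_n=Y^ne^{Ax}$ --- is a genuinely different and arguably tighter argument, since it sidesteps the flow-transfer bookkeeping entirely and rests only on the one calculus identity $Z'=W\circ Z$ together with the operational rule $W(D)e^{cx}=W(c)e^{cx}$. The paper's method has the advantage of explaining \emph{where} the formula comes from (the flow of the vector field), while your verification has the advantage of being airtight with minimal machinery; together they complement each other, and both are correct.
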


We have the expansion 
$$e^{v Y}1=e^{x Z(v)}=\sum_{n\ge0} \frac{v^n}{n!}\,p_n(x)$$

\begin{example}
For the Poisson case: $V(z)=e^z-1$, $Y=xe^{-D}$. We found above that
$$p_n(x)=Y^n1=x(x-1)\cdots(x-n+1)=x^{(n)}$$
With $Z(v)=\log(1+v)$, the expansion is
$$(1+v)^x=\sum_{n\ge 0}\frac{v^n}{n!} x^{(n)}=\sum_{n\ge0}  \binom{x}{n}\,v^n$$
the standard binomial theorem.
\end{example}
\end{subsection}
	
%%% Semigroup and group properties %%%
\begin{subsection}{Semigroup and group properties}
Observing that the generating function is an exponential, 
\begin{proposition}
We have the group property, \textsl{binomial type} relations, cf. \cite{R,RR}:
$$\sum_j\binom{n}{j} p_{n-j}(s)p_j(t)=p_n(s+t)\ .$$
\end{proposition}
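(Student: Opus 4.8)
The plan is to read the claimed identity directly off the exponential generating function established in the Main Formula. Since the polynomials $p_n(x)$ enter that formula only through the combination $e^{xZ(v)}$, and the entire dependence on the spatial variable $x$ sits in the exponent, an additive splitting $x\mapsto s+t$ becomes a multiplicative splitting of the generating function. Concretely, I would start from
$$e^{xZ(v)}=\sum_{n\ge0}\frac{v^n}{n!}\,p_n(x),$$
substitute $x=s+t$, and invoke the elementary exponential law
$$e^{(s+t)Z(v)}=e^{sZ(v)}\,e^{tZ(v)}.$$

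The next step is to expand both sides as power series in $v$. The left-hand side is immediately $\sum_n \frac{v^n}{n!}\,p_n(s+t)$. The right-hand side is the product of the two series $\sum_i \frac{v^i}{i!}\,p_i(s)$ and $\sum_j \frac{v^j}{j!}\,p_j(t)$; forming their Cauchy product and collecting the coefficient of $v^n$ gives
$$\sum_{i+j=n}\frac{p_i(s)\,p_j(t)}{i!\,j!}=\frac{1}{n!}\sum_{j=0}^{n}\binom{n}{j}\,p_{n-j}(s)\,p_j(t).$$
Comparing the coefficients of $v^n/n!$ on the two sides then yields the asserted binomial-type relation.

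The only point that deserves a word of justification is the rearrangement of the double sum into a Cauchy product. But since each $p_n$ is a polynomial and $Z(v)$ is analytic near $0$ with $Z(0)=0$, both series converge in a neighborhood of $v=0$, so the product formula is valid analytically; in any event it holds at the level of formal power series, which is all the operator calculus requires. I do not anticipate a genuine obstacle here: the whole content is the homomorphism property $e^{(s+t)Z}=e^{sZ}\,e^{tZ}$ of the exponential, which is precisely the statement that $\{p_n\}$ is a sequence of binomial type, and the remaining work is purely bookkeeping in matching coefficients.
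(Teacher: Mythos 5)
Your proof is correct and follows exactly the paper's own argument: the paper proves this proposition by expanding $e^{sZ(v)}e^{tZ(v)}=e^{(s+t)Z(v)}$ and regrouping, which is precisely your Cauchy-product comparison of coefficients. Your added remark on formal-power-series validity is a fine touch but the approach is the same.
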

which follows upon expanding
$$e^{sZ(v)}e^{tZ(v)}=e^{(s+t)Z(v)}$$
and regrouping. We have as well relations for the coefficients $\{p_{nk}\}$:
\begin{proposition}
The coefficients satisfy the semigroup property:
$$\sum_j\binom{n}{j}p_{n-j,k}\,p_{jl}=\binom{k+l}{k}\,p_{n,k+l}\ .$$
\end{proposition}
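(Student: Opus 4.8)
The plan is to extract, from the Main Formula generating function $e^{xZ(v)}=\sum_n \frac{v^n}{n!}\,p_n(x)$, the column generating functions of the coefficient matrix $\{p_{nk}\}$, and then exploit the trivial multiplicative identity $Z(v)^{k}\,Z(v)^{l}=Z(v)^{k+l}$.

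First I would expand the exponential directly in powers of $x$, writing $e^{xZ(v)}=\sum_k \frac{Z(v)^k}{k!}\,x^k$. Comparing the coefficient of $x^k$ here with the coefficient of $x^k$ obtained by substituting $p_n(x)=\sum_k p_{nk}\,x^k$ into the generating function, namely $\sum_n \frac{v^n}{n!}\,p_{nk}$, yields the identity $Z(v)^k=k!\sum_n \frac{v^n}{n!}\,p_{nk}$. In words, the $k$-th power of $Z$ is, up to the factor $k!$, the exponential generating function of the $k$-th column of the $P$-matrix.

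Next I would substitute this series expression into both sides of $Z(v)^{k}\,Z(v)^{l}=Z(v)^{k+l}$. The right-hand side becomes $(k+l)!\sum_n \frac{v^n}{n!}\,p_{n,k+l}$, while the left-hand side is $k!\,l!$ times a product of two exponential generating functions. The main technical step is the Cauchy product for exponential generating functions: the coefficient of $\frac{v^n}{n!}$ in $\bigl(\sum_a \frac{v^a}{a!}\,p_{ak}\bigr)\bigl(\sum_b \frac{v^b}{b!}\,p_{bl}\bigr)$ equals $\sum_j \binom{n}{j}\,p_{n-j,k}\,p_{jl}$, the binomial coefficient appearing precisely from reassembling $\frac{1}{a!\,b!}$ as $\frac{1}{n!}\binom{n}{j}$ with $a=n-j$ and $b=j$.

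Finally I would equate coefficients of $\frac{v^n}{n!}$ on the two sides, obtaining $k!\,l!\sum_j \binom{n}{j}\,p_{n-j,k}\,p_{jl}=(k+l)!\,p_{n,k+l}$, and divide through by $k!\,l!$ to recover the asserted relation $\sum_j \binom{n}{j}\,p_{n-j,k}\,p_{jl}=\binom{k+l}{k}\,p_{n,k+l}$. The only place demanding care is the bookkeeping of factorials, that is, verifying that $k!$, $l!$, and $(k+l)!$ recombine into the single binomial coefficient $\binom{k+l}{k}$; there is no genuine analytic obstacle, since every series involved is a formal power series in $v$ and all the manipulations are identities among generating functions.
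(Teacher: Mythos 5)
Your proposal is correct and follows exactly the paper's own argument: extract the column generating functions $Z(v)^k = k!\sum_n \frac{v^n}{n!}\,p_{nk}$ from the two expansions of $e^{xZ(v)}$, then apply $Z(v)^kZ(v)^l=Z(v)^{k+l}$ and rearrange factorials. You have merely made explicit the Cauchy-product step that the paper compresses into the phrase ``rearrange factorials to arrive at the result.''
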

\begin{proof}
Expand the generating function two ways:
$$e^{xZ(v)}=\sum_k x^k\,\frac{1}{k!}\,Z(v)^k=\sum_n\frac{v^n}{n!}\,\sum_k p_{nk}x^k$$
and see that		
$$Z(v)^k=\,\sum_n \frac{v^n\,k!\,p_{nk}}{n!}$$
Now use the semigroup property $Z(v)^kZ(v)^l=Z(v)^{k+l}$ and rearrange factorials to arrive at the result.
\end{proof}
\end{subsection}
\begin{subsection}{Composition of systems. Riordan group. Umbral composition.}
\hfill\break
Note. For simplicity, the range of summations is suppressed in these calculations. Generically $n\ge0$ running to $\infty$ with $k$ running from 0 to $n$.\\

Start with two systems
\begin{align*}
e^{xZ_1(v)}&=\sum\frac{v^n}{n!}\sum p_{nk}^{(1)}x^k=e^{vY_1}1\\
e^{xZ_2(v)}&=\sum\frac{v^n}{n!}\sum p_{nk}^{(2)}x^k=e^{vY_2}1
\end{align*}
When composing these systems, we will introduce scaling factors for additional flexibility. Consider the composed
$P$-matrices and resum (with $\gamma$ and $\sigma$ as scale factors):
\begin{align*}
\sum \frac{v^n}{n!}\,\gamma^n\sum\sum p_{nm}^{(2)}\sigma^m\,p_{mk}^{(1)}x^k&=
\sum \frac{(v\gamma)^n}{n!}\,\sum p_{nm}^{(2)}\sigma^m\,Y_1^m1\\
&=e^{(\sigma Y_1) Z_2(\gamma v)}1
\end{align*}
with $\sigma Y_1$ playing the r\^ole of $x$ for the $Z_2$ system. Now consider $\sigma Z_2(\gamma v)$ as the $v$-variable for the $Z_1$ system:
$$e^{(\sigma Z_2(\gamma v))Y_1}1=\exp\bigl(x Z_1(\sigma Z_2(\gamma v)) \bigr)\ .$$
That is,
\begin{theorem}{\rm Composition Theorem}\\
We have the expansion
$$\exp\bigl(x Z_1(\sigma Z_2(\gamma v)) \bigr)=\sum \frac{v^n}{n!}\,p_n^{(12)}(x)$$
where
$$p_n^{(12)}(x)=\gamma^n\sum\sum p_{nm}^{(2)}\sigma^m\,p_{mk}^{(1)}x^k\ .$$
\end{theorem}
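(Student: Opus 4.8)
The plan is to treat the displayed formula for $p_n^{(12)}(x)$ as the \emph{definition} of the composed coefficients and to collapse its exponential generating function by two successive applications of the Main Formula, one per constituent system. First I would form $\sum_n \frac{v^n}{n!}\,p_n^{(12)}(x)$ and show it equals $\exp(x Z_1(\sigma Z_2(\gamma v)))$; the whole argument is just reading the two nested inner sums as polynomial evaluations and then resumming in $n$.

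The key move is to recognize each inner summation as an evaluation of a constituent polynomial. The innermost $k$-sum is exactly $\sum_k p_{mk}^{(1)} x^k = p_m^{(1)}(x) = Y_1^m 1$, so after absorbing the factor $\sigma^m$ the $m$-sum becomes $\sum_m p_{nm}^{(2)} (\sigma Y_1)^m 1 = p_n^{(2)}(\sigma Y_1)\,1$, i.e. the $n$-th polynomial of system 2 with the raising operator $\sigma Y_1$ substituted for its variable. Folding $\gamma^n$ into the exponential and summing over $n$ then invokes the Main Formula for system 2, at $v$-variable $\gamma v$ and spatial variable $\sigma Y_1$, yielding $e^{(\sigma Y_1) Z_2(\gamma v)}\,1$.

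To finish I would note that $Z_2(\gamma v)$ is a scalar power series in $v$ and hence commutes with the operator $Y_1$, so the exponent rewrites as $(\sigma Z_2(\gamma v))\,Y_1$. A second application of the Main Formula, now for system 1 with its $v$-variable set equal to $\sigma Z_2(\gamma v)$, gives $\exp(x Z_1(\sigma Z_2(\gamma v)))$, which is the asserted generating function.

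The step that needs care — and the only genuine obstacle — is the substitution of the operator $\sigma Y_1$ for the scalar $x$ inside the Main Formula for system 2. This is legitimate because, at each fixed order in $\gamma v$, the Main Formula is an honest polynomial identity in $x$, namely $\sum_m p_{nm}^{(2)} x^m$; substituting any operator for $x$ and applying the result to the vacuum $1$ is therefore well defined, and by the operator calculus remarked on earlier every such function of $Y_1$ acts finitely on $1$. Once this substitution is sanctioned, the two generating-function identities chain together with no convergence issues beyond the formal ones already present in the individual systems.
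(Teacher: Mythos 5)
Your proposal is correct and follows essentially the same route as the paper: recognize the inner $k$-sum as $p_m^{(1)}(x)=Y_1^m1$, resum to $e^{(\sigma Y_1)Z_2(\gamma v)}1$ via the Main Formula for system 2 with $\sigma Y_1$ playing the r\^ole of $x$, then treat $\sigma Z_2(\gamma v)$ as the $v$-variable for system 1 and apply the Main Formula again. Your explicit justification of substituting the operator $\sigma Y_1$ for the scalar $x$ (order-by-order polynomial identity, finite action on the vacuum) is a point the paper leaves implicit, and is a welcome addition rather than a deviation.
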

Call $V_{12}$ the canonical operator for the combined system. Then we can solve:
$$z=Z_1\bigl(\sigma Z_2(\gamma V_{12})\bigr) \Longrightarrow V_{12}=\frac{1}{\gamma}\,V_2\bigl(\frac{1}{\sigma}\,V_1(z)\bigr)$$
 \begin{remark}
1. The construction for polynomial $f$:
$$f(x)=\sum c_m x^m \rightarrow \sum c_m p_m(x)=f(Y)1$$ 
is \textsl{umbral composition}. Cf. \cite{R,RR}.\\

2. Composing the systems by working with the $P$-matrices directly is the technique of the \textsl{Riordan group}, \cite{SGWW}.\\

Our approach  uses operators and variables explicitly. 
\end{remark}

%%%  Poisson subordination %%%%%%%%%%%%%%%%%%%%%%%%%%%%%%%%%%%%%%%%%

\begin{subsubsection}{Poisson subordination}	\label{sec:PS}
A common situation arises where \linebreak $Z_2(v)=e^v-1$. Then $p_{nm}^{(2)}=S(n,m)$, Stirling numbers of the second kind. 
We call this ``Poisson subordination by \ldots" --- whatever the type of $Z_1$ is. Consider $Z_1$ given, and $Z_2=e^v-1$. We have
$$e^{xZ_2(v)}=\sum\frac{v^n}{n!}\,p_n^{(2)}(x) \qquad \text{where}\qquad p_n^{(2)}(x)=\sum S(n,m)x^m$$
and by the above composition theorem we have, with \hfill\break $Z(v)=Z_1(\sigma(e^{\gamma v}-1))$,
$$p_n(x)=\gamma^n\sum S(n,m)\sigma^m\,p_{m}^{(1)}(x)\ .$$
\end{subsubsection}
\end{subsection}
\end{section}
\begin{section}{Inverse Pairs}	
Start with $V(z)$. We have $Y=xW(D)$ and polynomials $p_n(x)$ with generating function	
$$e^{vY}1 = e^{xZ(v)}=\sum_{n\ge0}\frac{v^n}{n!}\,p_n(x)$$
with operators acting on functions of $x$, in particular $z$ corresponds to $D=d/dx$.\\

Now consider the inverse $Z(v)$ as the canonical operator for its own system of polynomials in the variable $y$. Then form $Z'(v)$ and the
(dual) canonical operator
$$X=yZ'(\partial_y)^{-1}$$
Here we correspond $v$ to $\partial_y=d/dy$. 
Now operators act on functions of $y$ and we have the corresponding canonical polynomial sequence:
$$e^{zX}1 = e^{yV(z)}=\sum_{n\ge0}\frac{z^n}{n!}\,q_n(y)$$
with $Q$-matrix $\{q_{nk}\}$:
$$q_n(y)=\sum q_{nk}y^k$$
satisfying
\begin{align*}
X q_n&=q_{n+1}\cr
\mathcal{Z} q_n&=nq_{n-1}
\end{align*}
with $\mathcal{Z}=Z(d/dy)$.\\

Apply the composition theorem with $Z_1=Z$ and $Z_2=V$, $\sigma=\gamma=1$. 
We get
$$e^{xZ_1(Z_2(v))}=e^{xv}=\sum\frac{v^n}{n!}\,\sum\sum q_{nm}p_m(x)=\sum\frac{v^n}{n!}\,\sum\sum q_{nm}p_{mk}x^k$$
as expected, we have reciprocal relations, umbral composition,
$$\sum q_{nm}p_m(x)=x^n=q_n(Y)1$$ 
and the $P$ and $Q$ matrices inverse to each other:
$$\sum q_{nm}p_{mk}=\delta_{nk}$$
And dually,
$$\sum p_{nm}q_m(y)=y^n=p_n(X)1$$ 

We identify $z$ with $d/dx$ and $v$ with $d/dy$, with corresponding operators for capitalized variables.\\

We thus have:
\begin{subsection}{Commutation Relations}
The commutation relations are:
\begin{align*}
[z,x]&=I  \qquad \text{and} \qquad  [\mathcal{Z},X]=I\\
[v,y]&=I  \qquad \text{and} \qquad  [\mathcal{V},Y]=I
\end{align*}

These are four sets of \textsl{canonical pairs}.

The two reciprocal systems are \textsl{inverse pairs}.
\end{subsection}

Here is a summary illustrating the principal constructions, showing the duality between the systems of inverse pairs. 
From one side to the other, capitals are replaced by lower case and vice versa. \\

%%% Generic system %%%
\begin{table}
\begin{align*}
z \sim \frac{d}{dx} &\qquad \mathcal{Z}\sim \frac{d}{dX} 
& v \sim \frac{d}{dy}& \qquad\mathcal{V}\sim \frac{d}{dY} \\
\mathstrut\\
&V(z) && Z(v)\\
&V'(z) && Z'(v)\\
%\end{align*}
%\begin{align*}
\mathstrut\\
Y &= xV'(z)^{-1}=xZ'(\mathcal{V}) &X &= yZ'(v)^{-1}=yV'(\mathcal{Z})\\
x&=Y V'(z)=Y Z'(\mathcal{V})^{-1}  &y&=X Z'(v)=X V'(\mathcal{Z})^{-1}\\
\mathstrut\\
Y p_n&=p_{n+1}  &X q_n&=q_{n+1}\\
\mathcal{V} p_n&=np_{n-1} & \mathcal{Z} q_n&=nq_{n-1}\\
\mathstrut\\
p_n(X)1&=y^n & q_n(Y)1&=x^n\\
\mathstrut\\
e^{vY}1 &= e^{xZ(v)}=\sum_{n\ge0}\frac{v^n}{n!}\,p_n(x) & e^{zX}1 &= e^{yV(z)}=\sum_{n\ge0}\frac{z^n}{n!}\,q_n(y)
\end{align*}
\caption{\sc Generic system}
\end{table}
%%%%%%%%%%%%%%%%%%%%%%%%%%%%%%%%%%%%%%%%%%%%%%%%%%%
Note that the formulation $x=Y Z'(\mathcal{V})^{-1}$ gives the action of multiplication by $x$ on the polynomial sequence $\{p_n\}$. Similarly for
the dual system in terms of $y$ and the (inverse) sequence $\{q_n\}$.\\

We proceed to study some examples in detail. For each system, we find the main operators involved. Some of 
the $p$- and $q$-polynomials are presented, as the coefficients arising in these systems often have interesting
combinatorial significance (which, however, we will not discuss here).
\end{section}
\vfill\eject
%%%%%%%%%%%%%%%%%%%%%%%%%%%%%%%%%%%%%%%%%%%%%%%%%%%
%%%%%%%%%%%%%%%%%%%%%%%%%%%%%%%%%%%%%%%%%%%%%%%%%%%
%%% Poisson-Stirling system %%%
\begin{section}{Poisson-Stirling system}
The name Poisson comes from the moment generating function for a Poisson distribution with parameter $y$:
$$e^{-y}\sum_{n\ge0} e^{zn}\,\frac{y^n}{n!}=e^{y(e^z-1)}$$
We have for canonical variables
\begin{align*}
&V(z)=e^z-1 && Z(v)=\log(1+v)\\
&V'(z)=e^z && Z'(v)=\frac{1}{1+v}\\
Y &= xe^{-z}=x(1+\mathcal{V})^{-1}&X &= y(1+v)=ye^{\mathcal{Z}}\\
x&=Y e^z=Y (1+\mathcal{V})&y&=X \frac{1}{1+v}=X e^{-\mathcal{Z}}
\end{align*}
The generating functions are
$$e^{vY}1 = (1+v)^x=\sum_{n\ge0}\frac{v^n}{n!}\,p_n(x)$$
and
$$e^{zX}1 = e^{y(e^z-1)}=\sum_{n\ge0}\frac{z^n}{n!}\,q_n(y)\ .$$
\begin{proposition}\rm We have
\begin{align*}
p_n(x)&=x(x-1)\cdots(x-(n-1))=\sum s(n,k)\,x^k\\
q_n(y)&=\sum S(n,k)\,y^k=\mathcal{T}_n(y)
\end{align*}
with $s(n,k)$ and $S(n,k)$ denoting Stirling numbers of the first and second kinds respectively, cf. \cite{DLMF}.\\
The polynomials $q_n$ are \textsl{Touchard polynomials}, which we denote by $\mathcal{T}$.
\end{proposition}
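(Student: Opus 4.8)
The plan is to read both sequences directly off the exponential generating functions already computed for this system, and to match the resulting coefficients against the classical generating identities for Stirling numbers; no new machinery is needed beyond what precedes the statement.

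First I would dispatch the sequence $\{p_n\}$. The generating function $e^{vY}1=(1+v)^x$ is a plain binomial series, so comparing $(1+v)^x=\sum_n\binom{x}{n}v^n$ with $\sum_n\frac{v^n}{n!}p_n(x)$ gives $p_n(x)=n!\binom{x}{n}=x(x-1)\cdots(x-(n-1))$, the falling factorial (this also re-derives the earlier Poisson example). The expansion of this falling factorial in powers of $x$ is, by definition, $x(x-1)\cdots(x-(n-1))=\sum_k s(n,k)\,x^k$ with $s(n,k)$ the \emph{signed} Stirling numbers of the first kind. The one point deserving care is precisely this sign convention: it is $s(n,k)$, not $|s(n,k)|$, that arises here, since we have the falling rather than the rising factorial.

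Next I would handle $\{q_n\}$ via the dual generating function $e^{zX}1=e^{y(e^z-1)}$. Here I would invoke the standard exponential generating function for the Stirling numbers of the second kind, $\frac{(e^z-1)^k}{k!}=\sum_n S(n,k)\frac{z^n}{n!}$ (which vanishes for $n<k$), and substitute it into the expansion in powers of $y$:
$$e^{y(e^z-1)}=\sum_k\frac{y^k}{k!}\,(e^z-1)^k=\sum_k y^k\sum_n S(n,k)\frac{z^n}{n!}=\sum_n\frac{z^n}{n!}\sum_k S(n,k)\,y^k.$$
Comparing with $\sum_n\frac{z^n}{n!}q_n(y)$ yields $q_n(y)=\sum_k S(n,k)\,y^k$, which is by definition the Touchard polynomial $\mathcal{T}_n(y)$.

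As a self-contained alternative for the $q_n$ that avoids quoting the Stirling EGF, I would instead run the operator recursion. From the table the raising operator is $X=y(1+\mathcal{Z}^{\,0})$, i.e. $X=y\bigl(1+\tfrac{d}{dy}\bigr)$, so that $q_{n+1}=Xq_n$ with $q_0=1$. Assuming inductively $q_n(y)=\sum_k S(n,k)y^k$, a one-line computation gives $q_{n+1}(y)=\sum_k\bigl[k\,S(n,k)+S(n,k-1)\bigr]y^k$, and the bracket is exactly the defining recurrence $S(n+1,k)=k\,S(n,k)+S(n,k-1)$, closing the induction. I expect no genuine obstacle: both identities are classical, and the only real bookkeeping is the first-kind sign convention noted above together with keeping the summation ranges honest (the sums over $k$ run from $0$, or $1$ for $n\ge1$, up to $n$, and vanish outside).
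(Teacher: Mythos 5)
Your main argument for $q_n$ is essentially the paper's own proof: the paper's entire proof consists of citing the identity $\frac{(e^z-1)^m}{m!}=\sum_n S(n,m)\frac{z^n}{n!}$ and substituting into $e^{y(e^z-1)}$, exactly the computation you spell out; the $p_n$ claim the paper carries implicitly from the earlier Poisson example, where iterating $Yf(x)=xf(x-1)$ yields the falling factorial, with $\sum_k s(n,k)x^k$ then holding by the definition of the signed Stirling numbers of the first kind. Your route to $p_n$ via the binomial theorem $(1+v)^x=\sum_n\binom{x}{n}v^n$ is the same fact read in the opposite direction (the paper derives the binomial theorem \emph{from} $p_n$, you quote it \emph{to get} $p_n$), and your attention to the sign convention for $s(n,k)$ is apt. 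Your alternative derivation of $q_n$ is a genuinely different, self-contained route: writing the raising operator as $X=y(1+v)$ with $v\sim d/dy$, the recursion $q_{n+1}=yq_n+yq_n'$ matches the Stirling recurrence $S(n+1,k)=kS(n,k)+S(n,k-1)$ and closes an induction; this trades the Stirling EGF for the Stirling recurrence, is more elementary, and better showcases the operator calculus the paper promotes, whereas the paper's EGF citation is shorter and identifies the generating function directly. One notational slip: your ``$X=y(1+\mathcal{Z}^{\,0})$'' is garbled --- the table gives $X=y(1+v)=ye^{\mathcal{Z}}$, so in terms of $\mathcal{Z}$ the operator is $ye^{\mathcal{Z}}$, not $y(1+\mathcal{Z})$; since you immediately and correctly use $X=y\bigl(1+\tfrac{d}{dy}\bigr)$, the computation itself is unaffected, and there is no gap.
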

\begin{proof} For the $q$-polynomials, use the formula
$$\frac{(e^z-1)^m}{m!}=\sum_n S(n,m) \frac{z^n}{n!}\ .$$
\end{proof}

Selection of each:

\begin{align*}
p_0&=1\\
p_1&=x\\
p_2&=x^{2} - x\\
p_3&=x^{3} - 3 x^{2} + 2 x\\
p_4&=x^{4} - 6 x^{3} + 11 x^{2} - 6 x\\
p_5&=x^{5} - 10 x^{4} + 35 x^{3} - 50 x^{2} + 24 x\\
p_6&=x^{6} - 15 x^{5} + 85 x^{4} - 225 x^{3} + 274 x^{2} - 120 x\\
p_7&=x^{7} - 21 x^{6} + 175 x^{5} - 735 x^{4} + 1624 x^{3} - 1764 x^{2} + 720 x\\
p_8&=x^{8} - 28 x^{7} + 322 x^{6} - 1960 x^{5} + 6769 x^{4} - 13132 x^{3} + 13068 x^{2} - 5040 x\\
\end{align*}

\begin{align*}
q_0&=1\\
q_1&=y\\
q_2&=y^{2} + y\\
q_3&=y^{3} + 3 y^{2} + y\\
q_4&=y^{4} + 6 y^{3} + 7 y^{2} + y\\
q_5&=y^{5} + 10 y^{4} + 25 y^{3} + 15 y^{2} + y\\
q_6&=y^{6} + 15 y^{5} + 65 y^{4} + 90 y^{3} + 31 y^{2} + y\\
q_7&=y^{7} + 21 y^{6} + 140 y^{5} + 350 y^{4} + 301 y^{3} + 63 y^{2} + y\\
q_8&=y^{8} + 28 y^{7} + 266 y^{6} + 1050 y^{5} + 1701 y^{4} + 966 y^{3} + 127 y^{2} + y\\
\end{align*}

\begin{subsection}{Recurrences}
We have the standard actions $Yp_n=p_{n+1}$ and $\mathcal{V}p_n=np_{n-1}$ and correspondingly for
$X$ and $\mathcal{Z}$ acting on the $q$-polynomials. For example, $X=y(1+v)=y+yv$ gives the relation
$$q_{n+1}=yq_n+y\frac{dq_n}{dy}\ .$$

On the polynomials $\{p_n\}$, the formula $x=Y(1+\mathcal{V})=Y+Y\mathcal{V}$ reads
$$xp_n=p_{n+1}+np_n$$
so
$$p_{n+1}=(x-n)p_n$$
verifying the formula found above for $p_n$. While the relation $y=Xe^{-\mathcal{Z}}$ gives a recurrence as the expansion
$$yq_n=q_{n+1}-nq_n+\binom{n}{2}q_{n-1}-\cdots$$
We derive this using the fact that $e^{-\mathcal{Z}}$ acts as a shift operator on functions of $X$:
$$e^{-\mathcal{Z}}f(X)1=f(X-1)1$$
so
$$yq_n=X(X-1)^n1=\sum_k \binom{n}{k}(-1)^kq_{n+1-k}$$
as stated above.

And we note that the semigroup properties holding in general for the polynomials' coefficients give convolution relations for the Stirling numbers
of the first and second kinds.
\end{subsection}
\end{section}
%%%%%%%%%%%%%%%%%%%%%%%%%%%%%%%%%%%%%%%%%%%%%%%%%%%
%%%%%%%%%%%%%%%%%%%%%%%%%%%%%%%%%%%%%%%%%%%%%%%%%%%
%%% Laguerre system %%%
\begin{section}{Laguerre system}
We have for canonical variables
\begin{align*}
&V(z)=\frac{z}{1-z} && Z(v)=\frac{v}{1+v}\\
&V'(z)=\frac{1}{(1-z)^2} && Z'(v)=\frac{1}{(1+v)^2}\\
Y &= x(1-z)^2=x(1+\mathcal{V})^{-2}&X &= y(1+v)^2=y(1-\mathcal{Z})^{-2}\\\\
x&=Y(1-z)^{-2} =Y (1+\mathcal{V})^2    &y&=X (1+v)^{-2}=X (1-\mathcal{Z})^2
\end{align*}
The generating functions are
$$e^{vY}1 = e^{\textstyle x\,\frac{\textstyle v}{\textstyle 1+v}}=\sum_{n\ge0}\frac{v^n}{n!}\,p_n(x)$$
and
$$e^{zX}1 =e^{\textstyle y\,\frac{\textstyle z}{\textstyle 1-z}}=\sum_{n\ge0}\frac{z^n}{n!}\,q_n(y)$$
We use the notation from online \texttt{dlmf.nist.gov}, \cite[18.5.12]{DLMF}.
\begin{proposition}\rm We have
\begin{align*}
p_n(x)&=(-1)^n\,n!\,L_n^{(-1)}(x)	\\
q_n(y)&=\sum_{k=1}^n\binom{n}{k}\frac{\Gamma(n)}{\Gamma(k)}\,y^k=n!\,L_n^{(-1)}(-y)
\end{align*}

The polynomials $q_n$ are special Laguerre polynomials.
\end{proposition}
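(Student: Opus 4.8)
The plan is to identify both families by matching their exponential generating functions against the classical generating function for the generalized Laguerre polynomials, and then reading off coefficients; the explicit monomial expansion of $q_n$ will follow from the known finite sum for $L_n^{(\alpha)}$ together with an elementary factorial identity.

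First I would record the classical generating function (see \cite{DLMF})
$$\sum_{n\ge0} L_n^{(\alpha)}(x)\,t^n=\frac{1}{(1-t)^{\alpha+1}}\,\exp\!\left(-\frac{xt}{1-t}\right),$$
and specialize to $\alpha=-1$. The one point requiring care is that the prefactor $(1-t)^{-(\alpha+1)}$ collapses to $(1-t)^0=1$ exactly at $\alpha=-1$, so that
$$\sum_{n\ge0} L_n^{(-1)}(x)\,t^n=\exp\!\left(-\frac{xt}{1-t}\right).$$
This degenerate parameter value is the only feature that needs watching: it is also why $L_n^{(-1)}$ has vanishing constant term for $n\ge1$ (while $L_0^{(-1)}=1$), matching the general fact $p_n(0)=q_n(0)=0$ for $n\ge1$ noted earlier in the paper.

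For the $p$-polynomials I would substitute $t=-v$ in the specialized generating function. Since $1-t=1+v$ and $-xt/(1-t)=xv/(1+v)$, the right-hand side becomes $e^{xv/(1+v)}$, which is exactly $e^{vY}1$ from the Main Formula for this system. Writing $\sum_n L_n^{(-1)}(x)(-v)^n=\sum_n (-1)^n\,n!\,L_n^{(-1)}(x)\,\tfrac{v^n}{n!}$, comparison with $\sum_n p_n(x)\,\tfrac{v^n}{n!}$ and equating coefficients of $v^n$ gives $p_n(x)=(-1)^n\,n!\,L_n^{(-1)}(x)$. For the $q$-polynomials I would instead set $x\mapsto -y$ and $t\mapsto z$, so that $-xt/(1-t)=yz/(1-z)$ and the right-hand side becomes $e^{yz/(1-z)}=e^{zX}1$; equating coefficients of $z^n$ yields $q_n(y)=n!\,L_n^{(-1)}(-y)$.

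Finally, to obtain the stated explicit expansion of $q_n$, I would insert the finite-sum form $L_n^{(-1)}(x)=\sum_{k=1}^n(-1)^k\binom{n-1}{k-1}\,\tfrac{x^k}{k!}$ (valid for $n\ge1$, coming from the DLMF form $\binom{n-1}{n-k}=\binom{n-1}{k-1}$) into $q_n(y)=n!\,L_n^{(-1)}(-y)$; the signs cancel, leaving $\sum_{k=1}^n \tfrac{n!}{k!}\binom{n-1}{k-1}y^k$. The last step is the identity $\tfrac{n!}{k!}\binom{n-1}{k-1}=\binom{n}{k}\tfrac{\Gamma(n)}{\Gamma(k)}$, both sides equalling $\tfrac{n!\,(n-1)!}{k!\,(k-1)!\,(n-k)!}$, which closes the argument. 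I do not anticipate a genuine obstacle; the only care needed is the bookkeeping at the degenerate parameter $\alpha=-1$ and the consistent matching of the \cite{DLMF} normalization with the generating functions of this section.
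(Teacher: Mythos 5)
Your proposal is correct, and it takes exactly the route the paper intends: the paper states this proposition without a written proof, relying on the DLMF Laguerre formulas (\cite[18.5.12]{DLMF}), and your argument simply fills in that computation by specializing the classical generating function at $\alpha=-1$, substituting $t=-v$ (resp.\ $x\mapsto-y$, $t\mapsto z$) to match $e^{xv/(1+v)}$ and $e^{yz/(1-z)}$, and doing the factorial bookkeeping for the explicit sum. All steps check out, including the degenerate prefactor $(1-t)^{0}=1$ and the identity $\frac{n!}{k!}\binom{n-1}{k-1}=\binom{n}{k}\frac{\Gamma(n)}{\Gamma(k)}$.
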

Selection of each:

\begin{align*}
p_{0} &= 1\\
p_{1} &= x\\
p_{2} &= x^{2} - 2 x\\
p_{3} &= x^{3} - 6 x^{2} + 6 x\\
p_{4} &= x^{4} - 12 x^{3} + 36 x^{2} - 24 x\\
p_{5} &= x^{5} - 20 x^{4} + 120 x^{3} - 240 x^{2} + 120 x\\
p_{6} &= x^{6} - 30 x^{5} + 300 x^{4} - 1200 x^{3} + 1800 x^{2} - 720 x\\
p_{7} &= x^{7} - 42 x^{6} + 630 x^{5} - 4200 x^{4} + 12600 x^{3} - 15120 x^{2} + 5040 x\\
p_{8} &= x^{8} - 56 x^{7} + 1176 x^{6} - 11760 x^{5} + 58800 x^{4} - 141120 x^{3} + 141120 x^{2} - 40320 x\\
\end{align*}

\begin{align*}
q_{0} &= 1\\
q_{1} &= y\\
q_{2} &= y^{2} + 2 y\\
q_{3} &= y^{3} + 6 y^{2} + 6 y\\
q_{4} &= y^{4} + 12 y^{3} + 36 y^{2} + 24 y\\
q_{5} &= y^{5} + 20 y^{4} + 120 y^{3} + 240 y^{2} + 120 y\\
q_{6} &= y^{6} + 30 y^{5} + 300 y^{4} + 1200 y^{3} + 1800 y^{2} + 720 y\\
q_{7} &= y^{7} + 42 y^{6} + 630 y^{5} + 4200 y^{4} + 12600 y^{3} + 15120 y^{2} + 5040 y\\
q_{8} &= y^{8} + 56 y^{7} + 1176 y^{6} + 11760 y^{5} + 58800 y^{4} + 141120 y^{3} + 141120 y^{2} + 40320 y\\ 
\end{align*}

\begin{subsection}{Recurrences}
From $x=Y (1+\mathcal{V})^2=Y+2Y\mathcal{V}+Y\mathcal{V}^2$ we get
$$xp_n=p_{n+1}+2np_n+n(n-1)p_{n-1}$$
or
$$p_{n+1}=(x-2n)p_n-n(n-1)p_{n-1}$$
Similarly, $y=X-2X\mathcal{Z}+X\mathcal{Z}^2$ gives
$$q_{n+1}=(y+2n)q_n-n(n-1)q_{n-1}$$
while $Y=x(1-z)^2$ gives the differential recurrence
$$p_{n+1}=x(p_n-2p_n'+p_n'')\ .$$
\end{subsection}

\end{section}

%%%%%%%%%%%%%%%%%%%%%%%%%%%%%%%%%%% TO HERE %%%%%%%%%%%%%%%%
%%%%%%%%%%%%%%%%%%%%%%%%%%%%%%%%%%%%%%%%%%%%%%%%%%%
%%% Hermite-Bessel system %%%
\begin{section}{Hermite-Bessel system}
Here we are looking at polynomials related to the Gaussian distribution. We have for canonical variables
\begin{align*}
&V(z)=z-z^2/2 && Z(v)=1-\sqrt{1-2v}\\
&V'(z)= 1-z && Z'(v)=\frac{1}{\sqrt{1-2v}}\\
Y &= x(1-z)^{-1}=x(1-2\mathcal{V})^{-1/2}       &X &= y\sqrt{1-2v}=y(1-\mathcal{Z})\\
x&=Y (1-z)=Y (1-2\mathcal{V})^{1/2}    &y&=X \frac{1}{\sqrt{1-2v}}=X \frac{1}{1-\mathcal{Z}}
\end{align*}
The generating functions are
$$e^{vY}1 = e^{x(1-\sqrt{1-2v})}=\sum_{n\ge0}\frac{v^n}{n!}\,p_n(x)$$
and
$$e^{zX}1 =e^{y(z-z^2/2)}=\sum_{n\ge0}\frac{z^n}{n!}\,q_n(y)$$

For the modified Hermite polynomials, see \cite[18.12.16]{DLMF}. For Bessel polynomials, cf. \cite[18.34.2]{DLMF}.
\begin{proposition} \rm We have
\begin{align*}
p_n(x)&=x\theta_{n-1}(x)=\sum \frac{\Gamma(n+k)}{\Gamma(n-k)\,k!} \frac{x^{n-k}}{2^k}\\
q_n(y)&=He_n(y,y)=\sum\binom{n}{2k}\frac{(2k)!}{2^k\,k!}	(-1)^ky^{n-k}
\end{align*}
where $\theta_n$ are Bessel polynomials and $He_n(x,t)$ are probabilists' Hermite polynomials
(orthogonal with respect to a Gaussian distribution with mean zero and variance $t$) with generating function
\begin{equation}\label{eq:He}
e^{xz-z^2t/2}=\sum_{n\ge0}\frac{z^n}{n!}\,He_n(x,t)
\end{equation}
\end{proposition}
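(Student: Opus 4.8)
The plan is to treat the two families separately, using the two generating functions displayed just above the statement. The $q$-polynomials are the easier case, so I would dispatch them first by a direct comparison of generating functions. Equation \eqref{eq:He} defines the probabilists' Hermite polynomials by $e^{xz - z^2 t/2} = \sum_n \frac{z^n}{n!}He_n(x,t)$; setting $x = y$ and $t = y$ turns the left-hand side into exactly $e^{y(z - z^2/2)} = e^{zX}1$, the generating function of $\{q_n\}$. Hence $q_n(y) = He_n(y,y)$ with no further argument. To obtain the explicit sum I would expand the product $e^{yz}\cdot e^{-yz^2/2}$ as a Cauchy product of the two exponential series, collect the coefficient of $z^n$ (so that the power $z^{j}\cdot z^{2m}$ contributes when $j+2m=n$), and read off $\sum_k \binom{n}{2k}\frac{(2k)!}{2^k\,k!}(-1)^k y^{n-k}$; this is a routine rearrangement of factorials with $t=y$ supplying the extra factor of $y$ in each term.

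For the $p$-polynomials the key observation is that $V(z) = z - z^2/2$ and $Z(v)$ are a functional-inverse pair, so Lagrange inversion is available to extract the coefficients of $e^{xZ(v)}$ without ever expanding the square root. Writing $p_n(x) = n!\,[v^n]e^{xZ(v)}$, I would apply the Lagrange inversion formula in the form $[v^n]H(Z(v)) = \frac{1}{n}[z^{n-1}]H'(z)\,(z/V(z))^n$ with $H(z) = e^{xz}$. The decisive simplification is that $z/V(z) = (1 - z/2)^{-1}$, so the inversion kernel is simply $(1-z/2)^{-n}$, an ordinary binomial series. Expanding $x\,e^{xz}(1-z/2)^{-n}$ as a double series and extracting the coefficient of $z^{n-1}$ then yields $p_n(x) = \sum_k \frac{(n+k-1)!}{(n-1-k)!\,k!}\,\frac{x^{n-k}}{2^k}$, which is the stated formula once $(n+k-1)!$ and $(n-1-k)!$ are rewritten as $\Gamma(n+k)$ and $\Gamma(n-k)$.

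Finally I would identify this sum with $x\,\theta_{n-1}(x)$. Using the standard description of the reverse Bessel polynomials, $\theta_m(x) = \sum_j \frac{(m+j)!}{2^j\,j!\,(m-j)!}\,x^{m-j}$, the substitution $m = n-1$ and multiplication by $x$ reproduces term by term the sum just obtained, matching the index $k = j$. I would note in passing that the apparent top term $k=n$ is harmless, since $1/\Gamma(n-k)$ vanishes there; the sum therefore stops at $k = n-1$ and $p_n$ is divisible by $x$, consistent with the general fact $p_n(0)=0$ recorded earlier.

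The step I expect to require the most care is the Lagrange inversion bookkeeping for $p_n$: keeping the $\frac{1}{n}$ prefactor, the shift to $[z^{n-1}]$, and the conversion of binomial coefficients into Gamma-function form all aligned so that the final index range and factorials match the claimed expression exactly. By contrast, the recognition of the Hermite generating function and the concluding identification with the Bessel-polynomial normalization are essentially routine once the coefficient formula is in hand.
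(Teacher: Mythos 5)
Your proof is correct, and for the $p$-polynomials it takes a genuinely different route from the paper. For the $q$-polynomials you do exactly what the paper does: substitute $x=y$, $t=y$ in the generating function \eqref{eq:He} and match it against $e^{zX}1=e^{y(z-z^2/2)}$, then expand the Cauchy product to get the explicit sum; that part is the same. For the $p$-polynomials, however, the paper supplies no computation at all: it simply cites the DLMF entry for Bessel polynomials, in effect recognizing the classical generating function $e^{x(1-\sqrt{1-2v})}=\sum_n \frac{v^n}{n!}\,x\theta_{n-1}(x)$, and then corroborates the identification in its Recurrences subsection by the operator-calculus computation $x^2=Y^2(1-2\mathcal{V})-Y$, which yields the three-term recurrence $p_{n+1}=(2n-1)p_n+x^2p_{n-1}$ characteristic of the Bessel family. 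Your Lagrange-inversion argument is self-contained where the paper is citational: since $z/V(z)=(1-z/2)^{-1}$, the formula $[v^n]e^{xZ(v)}=\frac{1}{n}[z^{n-1}]\,x e^{xz}(1-z/2)^{-n}$ gives $p_n(x)=(n-1)!\,[z^{n-1}]\,x e^{xz}(1-z/2)^{-n}=\sum_{k=0}^{n-1}\frac{(n+k-1)!}{(n-1-k)!\,k!}\,\frac{x^{n-k}}{2^k}$, which is the claimed coefficient formula, and the identification with $x\theta_{n-1}(x)$ then reduces to matching the standard expansion of $\theta_m$ at $m=n-1$. I verified the bookkeeping you flagged as delicate: the $\frac{1}{n}$ prefactor against $n!$, the binomial expansion $\binom{n+k-1}{k}=\frac{\Gamma(n+k)}{k!\,(n-1)!}$, and the Gamma rewriting all align, and your observation that $1/\Gamma(n-k)$ annihilates the $k=n$ term correctly reconciles the index ranges (and shows $p_n(0)=0$). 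What the paper's route buys is coherence with its operator-calculus theme, the recurrence falling straight out of $x=Y(1-2\mathcal{V})^{1/2}$; what yours buys is an actual proof of the explicit sum, which the paper asserts by reference. The only caveat, a trivial one, is that the inversion formula as stated requires $n\ge 1$; the case $p_0=1$ is immediate.
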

For Bessel polynomials, the formulation 
$$ x\theta_{n-1}(x)=\sum_{k=0}^{n-1} \binom{n+k-1}{2k}\frac{(2k)!}{2^k\,k!}x^{n-k}$$
shows clearly the connection with moments of the Gaussian distribution or, in this case, one should say moments of the $\chi^2-$distribution.\\

Selection of each:

\begin{align*}
p_0&=1\\
p_1&=x\\
p_2&=x^{2} + x\\
p_3&=x^{3} + 3 x^{2} + 3 x\\
p_4&=x^{4} + 6 x^{3} + 15 x^{2} + 15 x\\
p_5&=x^{5} + 10 x^{4} + 45 x^{3} + 105 x^{2} + 105 x\\
p_6&=x^{6} + 15 x^{5} + 105 x^{4} + 420 x^{3} + 945 x^{2} + 945 x\\
p_7&=x^{7} + 21 x^{6} + 210 x^{5} + 1260 x^{4} + 4725 x^{3} + 10395 x^{2} + 10395 x\\
p_8&=x^{8} + 28 x^{7} + 378 x^{6} + 3150 x^{5} + 17325 x^{4} + 62370 x^{3} + 135135 x^{2} + 135135 x\\
\end{align*}
\begin{align*}
q_0&=1\\
q_1&=y\\
q_2&=y^{2} - y\\
q_3&=y^{3} - 3 y^{2}\\
q_4&=y^{4} - 6 y^{3} + 3 y^{2}\\
q_5&=y^{5} - 10 y^{4} + 15 y^{3}\\
q_6&=y^{6} - 15 y^{5} + 45 y^{4} - 15 y^{3}\\
q_7&=y^{7} - 21 y^{6} + 105 y^{5} - 105 y^{4}\\
q_8&=y^{8} - 28 y^{7} + 210 y^{6} - 420 y^{5} + 105 y^{4}\\
\end{align*}
Notice the diagonals along the Hermite polynomials yield the coefficients for the Bessel polynomials up to sign.\bigskip

\begin{subsection}{Recurrences}
With $x=Y (1-2\mathcal{V})^{1/2} $ we calculate
\begin{align*}
x^2&=Y(1-2\mathcal{V})^{1/2}Y(1-2\mathcal{V})^{1/2}\\
&=Y^2(1-2\mathcal{V})+Y(\frac{d}{d\mathcal{V}}(1-2\mathcal{V})^{1/2})(1-2\mathcal{V})^{1/2}\\
&=Y^2(1-2\mathcal{V})-Y
\end{align*}
Hence,
$$x^2p_{n}=p_{n+2}-(2n+1)p_{n+1}$$
or, shifting the index $n$ back by one, and rearranging,
$$p_{n+1}=(2n-1)p_n+x^2p_{n-1}$$
a recurrence formula for Bessel polynomials. For Hermite polynomials, we have
$X=y-y\mathcal{Z}$ yielding
$$q_{n+1}=yq_n-ynq_{n-1}=y(q_n-nq_{n-1})\ .$$
\end{subsection}
\end{section}
%%%%%%%%%%%%%%%%%%%%%%%%%%%%%%%%%%%%%%%%%%%%%%%%%%%
%%%%%%%%%%%%%%%%%%%%%%%%%%%%%%%%%%%%%%%%%%%%%%%%%%%
%%% Arcsinh system %%%
\begin{section}{Arcsinh system}

\begin{align*}
&V(z)=\arcsinh z && Z(v)=\sinh v\\
&V'(z)= \frac{1}{\sqrt{1+z^2}} && Z'(v)= \cosh v\\
Y &= x\sqrt{1+z^2}=x\cosh \mathcal{V}      &X &=  y\,\sech v=y\frac{1}{\sqrt{1+\mathcal{Z}^2}}\\
x&=Y \frac{1}{\sqrt{1+z^2}}=Y \sech \mathcal{V}   &y&=X \cosh v=X \sqrt{1+\mathcal{Z}^2}
\end{align*}

The generating functions are
$$e^{vY}1 = e^{x\sinh v}=\sum_{n\ge0}\frac{v^n}{n!}\,p_n(x)$$
and
$$e^{zX}1 = e^{y\,\arcsinh z}=\sum_{n\ge0}\frac{z^n}{n!}\,q_n(y)$$

\begin{proposition} \rm We have, denoting the Touchard polynomials $\mathcal{T}_n(x)$,	
\begin{align*}
p_n(x)&=\sum_j \binom{n}{j}(-1)^j\mathcal{T}_{n-j}(x/2)\mathcal{T}_j(-x/2)\\
q_n(y)&=\begin{cases}
y\,\prod\limits_{k=0}^{(n-3)/2} (y^2-(2k+1)^2)\,,& n \text{ odd}\\
\mathstrut\\
\prod\limits_{k=0}^{(n-2)/2} (y^2-(2k)^2)\,,& n \text{ even}\\
\end{cases}
\end{align*}
\end{proposition}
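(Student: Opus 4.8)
The plan is to handle the two formulas by quite different routes: the $p_n$ identity via a factorization of the generating function into Touchard series, and the $q_n$ identity via a two-term recursion extracted from the operator calculus.

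For the $p$-polynomials, the idea is to split the hyperbolic sine into exponentials. Writing $x\sinh v = \tfrac{x}{2}e^{v}-\tfrac{x}{2}e^{-v}$ factors the generating function as
$$e^{x\sinh v}=e^{(x/2)e^{v}}\,e^{-(x/2)e^{-v}}.$$
From the Poisson--Stirling system the Touchard polynomials have exponential generating function $e^{y(e^{z}-1)}=\sum_n \frac{z^n}{n!}\mathcal{T}_n(y)$, equivalently $e^{ye^{z}}=e^{y}\sum_n \frac{z^n}{n!}\mathcal{T}_n(y)$. Applying this to each factor, with $(y,z)=(x/2,v)$ in the first and $(y,z)=(-x/2,-v)$ in the second, the scalar prefactors $e^{x/2}$ and $e^{-x/2}$ cancel, leaving a product of two exponential series. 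Forming the Cauchy product and reading off the coefficient of $v^n/n!$ produces exactly $p_n(x)=\sum_j\binom{n}{j}(-1)^j\mathcal{T}_{n-j}(x/2)\mathcal{T}_j(-x/2)$. The only delicate points are the sign $(-1)^j$, which comes from the $z=-v$ substitution in the second factor, and checking that the two prefactors indeed cancel.

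For the $q$-polynomials, I would stay inside the operator calculus already set up, mirroring the Hermite--Bessel recurrence computation. From the table, multiplication by $y$ acts on $\{q_n\}$ as $y=X\sqrt{1+\mathcal{Z}^2}$, where $Xq_n=q_{n+1}$ and $\mathcal{Z}q_n=nq_{n-1}$. Squaring and commuting $\sqrt{1+\mathcal{Z}^2}$ past $X$ through the dual commutation relation $[f(\mathcal{Z}),X]=f'(\mathcal{Z})$, so that $[\sqrt{1+\mathcal{Z}^2},X]=\mathcal{Z}/\sqrt{1+\mathcal{Z}^2}$, gives
$$y^2=X^2(1+\mathcal{Z}^2)+X\mathcal{Z}.$$
Applying this to $q_n$ and using $X^2\mathcal{Z}^2q_n=n(n-1)q_n$ and $X\mathcal{Z}q_n=nq_n$ collapses the right side to $q_{n+2}+n^2q_n$, so that
$$q_{n+2}=(y^2-n^2)\,q_n.$$
(Equivalently, $F=e^{y\arcsinh z}$ satisfies $(1+z^2)F_{zz}+zF_z-y^2F=0$, which translates coefficientwise into the same recursion.) With base cases $q_0=1$ and $q_1=y$, a straightforward induction yields the stated products: the even chain gives $\prod_{k=0}^{(n-2)/2}(y^2-(2k)^2)$ and the odd chain gives $y\prod_{k=0}^{(n-3)/2}(y^2-(2k+1)^2)$.

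I expect the main obstacle to be the operator manipulation for $q_n$: one must justify commuting the non-polynomial symbol $\sqrt{1+\mathcal{Z}^2}$ past $X$ and track the resulting lower-order term $X\mathcal{Z}$, exactly as in the Hermite--Bessel computation of $x^2$. Once the clean recursion $q_{n+2}=(y^2-n^2)q_n$ is secured, the induction is routine, and the $p_n$ identity reduces to careful bookkeeping in the Cauchy product after the factorization of $e^{x\sinh v}$ and the Touchard generating function are invoked.
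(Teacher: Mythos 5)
Your proposal is correct and takes essentially the same route as the paper: the $p_n$ identity via the factorization $e^{x\sinh v}=e^{x(e^v-1)/2}\,e^{-x(e^{-v}-1)/2}$ into two Touchard generating functions followed by a Cauchy product, and the $q_n$ formula via the operator identity $y^2=X^2(1+\mathcal{Z}^2)+X\mathcal{Z}=X^2+(X\mathcal{Z})^2$, which is exactly the computation in the paper's recurrence subsection yielding $q_{n+2}=(y^2-n^2)q_n$ and the stated products by induction from $q_0=1$, $q_1=y$. The only cosmetic difference is that you carry explicit prefactors $e^{\pm x/2}$ that cancel, where the paper absorbs the $-1$ shifts into the exponents directly.
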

\begin{proof} For $\{p_n\}$, write $\sinh v=(e^v-e^{-v})/2$ and use the Poisson-Stirling $q$-polynomials:
\begin{align*}
e^{x\sinh v}&=e^{x(e^v-1)/2}e^{-x(e^{-v}-1)/2}\\
&=\left(\sum_n\frac{v^n}{n!}\mathcal{T}_n(x/2)\right)\left(\sum_n\frac{(-1)^nv^n}{n!}\mathcal{T}_n(-x/2)\right)
\end{align*}
which yields the result upon rearrangement. The formula for the $q_n$ will be seen directly from the recurrence derived below.
\end{proof}

Selection of each:

\begin{align*}
p_0&=1\\
p_1&=x\\
p_2&=x^{2}\\
p_3&=x^{3} + x\\
p_4&=x^{4} + 4 x^{2}\\
p_5&=x^{5} + 10 x^{3} + x\\
p_6&=x^{6} + 20 x^{4} + 16 x^{2}\\
p_7&=x^{7} + 35 x^{5} + 91 x^{3} + x\\
p_8&=x^{8} + 56 x^{6} + 336 x^{4} + 64 x^{2}\\
\end{align*}
\begin{align*}
q_0&=1\\
q_1&=y\\
q_2&=y^{2}\\
q_3&=y^{3} - y\\
q_4&=y^{4} - 4 y^{2}\\
q_5&=y^{5} - 10 y^{3} + 9 y\\
q_6&=y^{6} - 20 y^{4} + 64 y^{2}\\
q_7&=y^{7} - 35 y^{5} + 259 y^{3} - 225 y\\
q_8&=y^{8} - 56 y^{6} + 784 y^{4} - 2304 y^{2}\\
\end{align*}

\begin{subsection}{Recurrences. Bessel operator.}
Here we have the interesting appearance of the Bessel operator. Effectively, $Y$ is the square root of the Bessel
operator $(xD)^2+x^2$. To see this, calculate, with $Y=x\sqrt{1+z^2}$, $z=D=d/dx$,
\begin{align*}
Y^2&=x\sqrt{1+z^2}\,x\sqrt{1+z^2}\\
&=x^2(1+z^2)+x\left(\frac{d}{dz}(\sqrt{1+z^2})\right)\sqrt{1+z^2}\\
&=x^2(1+z^2)+xz\\
&=x^2+(xz)^2\ .
\end{align*}
Thus, the polynomials $p_n$ are formed by repeated application of the Bessel operator. The differential recurrence has the form:
$$p_{n+2}=x^2p_n+\left(x\frac{d}{dx}\right)^2p_n$$

For the $q$-sequence, use the dual relation $y=X\sqrt{1+\mathcal{Z}^2}$ to get similarly
$$y^2=X^2+(X\mathcal{Z})^2$$
That is,
$$y^2q_n=q_{n+2}+n^2q_n$$
or
$$q_{n+2}=(y^2-n^2)q_n$$
and hence the formulas quoted above.
\end{subsection}
\end{section}
%%%%%%%%%%%%%%%%%%%%%%%%%%%%%%%%%%%%%%%%%%%%%%%%%%%
%%%%%%%%%%%%%%%%%%%%%%%%%%%%%%%%%%%%%%%%%%%%%%%%%%%
%%% Tanh system %%%
\section{Tanh system}

\begin{align*}
&V(z)=\tanh z && Z(v)=\arctanh v\\
&V'(z)={\rm sech}^2z && Z'(v)= \frac{1}{1-v^2}\\
Y &=  x\cosh^2z=x\,\frac{1}{1-\mathcal{V}^2}      &X &=  y(1-v^2)=y\,{\rm sech}^2\mathcal{Z}\\
x&=Y\,{\rm sech}^2z=Y(1-\mathcal{V}^2)   &y&=X\frac{1}{1-v^2}=X\cosh^2\mathcal{Z}
\end{align*}
and the generating functions are
$$e^{vY}1 = e^{x\,\arctanh v}=\sum_{n\ge0}\frac{v^n}{n!}\,p_n(x)$$
$$e^{zX}1 = e^{y\tanh z}=\sum_{n\ge0}\frac{z^n}{n!}\,q_n(y)$$

\begin{proposition} \rm We have the ``zero-step" Krawtchouk polynomials
$$p_n(x)=\sum_k \binom{n}{k}(-1)^k\left(\frac{x}{2}\right)^{(n-k)}\left(-\frac{x}{2}\right)^{(k)}=(-1)^n\,K_n(x/2;1/2,0)$$
cf. \cite[18.23.3]{DLMF}, dropping the factor involving $N$.\\
We have $a^{(n)}=a(a-1)\cdots(a-n+1)$ denoting falling factorial (factorial power)\\

and for the $q$-polynomials we have Poisson subordination by Laguerre:
$$q_n(y)=\sum_m S(n,m)\,2^{n-m}\,p_m^{\rm Laguerre}(y)\ .$$
\end{proposition}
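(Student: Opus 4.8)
The plan is to treat the two sequences separately, reducing each to a structure already established. For the $p_n$ I would start from the generating function $e^{x\arctanh v}=\sum_{n\ge0} v^n p_n(x)/n!$ and exploit the logarithmic form $\arctanh v=\tfrac12\bigl(\log(1+v)-\log(1-v)\bigr)$, which factors the generating function as a product
$$e^{x\arctanh v}=(1+v)^{x/2}\,(1-v)^{-x/2}.$$
Each factor is a shifted binomial series of exactly the Poisson type established in the Poisson-Stirling example: $(1+v)^a=\sum_j \frac{v^j}{j!}\,a^{(j)}$ with $a^{(j)}$ the falling factorial. Taking $a=x/2$ in the first factor, and $a=-x/2$ together with $v\mapsto -v$ in the second, gives $(1-v)^{-x/2}=\sum_k \frac{(-1)^k v^k}{k!}\,(-x/2)^{(k)}$.

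Next I would form the Cauchy product of these two series and read off the coefficient of $v^n/n!$. This produces directly
$$p_n(x)=\sum_k \binom{n}{k}(-1)^k\,(x/2)^{(n-k)}\,(-x/2)^{(k)},$$
which is the first asserted formula. The identification with $(-1)^n K_n(x/2;1/2,0)$ is then a matter of matching this explicit double sum against the standard Krawtchouk form of \cite[18.23.3]{DLMF}, reading off the parameter value $p=1/2$ and formally setting aside the sample-size ($N$) factor, as signalled by the ``zero-step'' terminology.

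For the $q_n$ I would invoke the Poisson subordination of Section \ref{sec:PS} rather than computing directly. The generating function of the $q$-system is $e^{y\tanh z}$, so $\tanh z$ plays the role of the exponent function, and the task is to exhibit it as a Poisson-subordinated Laguerre function. Writing $\tanh z=(e^{2z}-1)/(e^{2z}+1)$ and setting $u=e^{2z}-1$, one finds
$$\tanh z=\frac{u/2}{1+u/2},$$
which is precisely $Z_1\bigl(\sigma(e^{\gamma z}-1)\bigr)$ with the Laguerre canonical function $Z_1(w)=w/(1+w)$, scale factors $\sigma=\tfrac12$, $\gamma=2$. Applying the subordination formula $q_n(y)=\gamma^n\sum_m S(n,m)\sigma^m p_m^{(1)}(y)$ and collecting $\gamma^n\sigma^m=2^{n-m}$ yields $q_n(y)=\sum_m S(n,m)\,2^{n-m}\,p_m^{\rm Laguerre}(y)$, since $p_m^{(1)}$ is exactly the Laguerre $p$-polynomial for $Z_1(w)=w/(1+w)$.

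The individual steps are short; the one point requiring genuine care is the $q_n$ identification, where one must match the scale factors $\sigma,\gamma$ so that the decomposition $\tanh z=Z_1(\sigma(e^{\gamma z}-1))$ lands exactly on the Laguerre $Z_1(w)=w/(1+w)$ and not on some rescaled variant. The sign bookkeeping in the Cauchy product for $p_n$ — tracking the $(-1)^k$ coming from the second binomial series — is the other place where an error could slip in, but both are routine once the factorizations above are in place.
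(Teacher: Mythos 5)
Your proof is correct and takes essentially the same route as the paper: for the $p_n$, the factorization $e^{x\arctanh v}=\bigl(\frac{1+v}{1-v}\bigr)^{x/2}=(1+v)^{x/2}(1-v)^{-x/2}$ expanded by the binomial theorem, and for the $q_n$, Poisson subordination with the Laguerre function $Z_1(v)=v/(1+v)$ and scale factors $\sigma=1/2$, $\gamma=2$, exactly as the paper verifies via $\tanh z=\frac{e^{2z}-1}{e^{2z}+1}$. Your Cauchy-product bookkeeping merely fills in the expansion step the paper leaves implicit, so there is nothing substantively different to flag.
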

\begin{proof}Recall
$$\tanh z=\frac{e^{2z}-1}{e^{2z}-1} \qquad \text{and}\qquad \arctanh v=\frac{1}{2}\,\log\bigl(\frac{1+v}{1-v}\bigr)$$
So for $p$-polynomials
$$\left(\frac{1+v}{1-v}\right)^{x/2}=\sum_n\frac{v^n}{n!}\,p_n(x)\ .$$
Expanding by the binomial theorem and combining terms yields the formula.\\

For the $q$-polynomials, refer to Section \S\ref{sec:PS}. We verify Poisson subordination with $Z_1(v)=v/(1+v)$, $\sigma=1/2$, $\gamma=2$.
Here we want the expansion in powers of $z$. We have $v=(1/2)(e^{2z}-1)$ and, exchanging $Z$ and $V$:
$$V=\frac{1}{2}\,\frac{e^{2z}-1}{1+(1/2)(e^{2z}-1)}=\frac{e^{2z}-1}{e^{2z}+1}=\tanh z$$
thus the result for the $q$-polynomials.
\end{proof}

Selection of each:

\begin{align*}
p_0&=1\\
p_1&=x\\
p_2&=x^{2}\\
p_3&=x^{3} + 2 x\\
p_4&=x^{4} + 8 x^{2}\\
p_5&=x^{5} + 20 x^{3} + 24 x\\
p_6&=x^{6} + 40 x^{4} + 184 x^{2}\\
p_7&=x^{7} + 70 x^{5} + 784 x^{3} + 720 x\\
p_8&=x^{8} + 112 x^{6} + 2464 x^{4} + 8448 x^{2}\\
\end{align*}
\begin{align*}
q_0&=1\\
q_1&=y\\
q_2&=y^{2}\\
q_3&=y^{3} - 2 y\\
q_4&=y^{4} - 8 y^{2}\\
q_5&=y^{5} - 20 y^{3} + 16 y\\
q_6&=y^{6} - 40 y^{4} + 136 y^{2}\\
q_7&=y^{7} - 70 y^{5} + 616 y^{3} - 272 y\\
q_8&=y^{8} - 112 y^{6} + 2016 y^{4} - 3968 y^{2}\\
\end{align*}

\begin{subsection}{Recurrences}
With $x=Y (1-\mathcal{V}^2) $ we have directly
$$xp_{n}=p_{n+1}-n(n-1)p_{n-1}$$
and the recurrence
$$p_{n+1}=xp_n+n(n-1)p_{n-1}\ .$$

Duallly, we have $X=y(1-v^2)$ yielding the differential recurrence
$$q_{n+1}=yq_n-yq_{n}''=y(q_n-q_n'')\ .$$

From $y=X\cosh^2\mathcal{Z}$, writing $\cosh^2\mathcal{Z}=\bigl(e^{2\mathcal{Z}}+e^{-2\mathcal{Z}}+2\bigr)/4$, we have
\begin{align*}
yq_n&=\frac{X}{4}\bigl[(X+2)^n+(X-2)^n+2X^n\bigr]1\\
&=\frac{X}{4}\left[\sum_k\binom{n}{k} X^{n-k}(2^k+(-2)^{k}) \right]1+\frac{1}{2}q_{n+1}\\
&=\frac{1}{4}\sum_j\binom{n}{2j}X^{n+1-2j}2^{2j+1}1+\frac{1}{2}q_{n+1}
\end{align*}
or, taking out the term $j=0$,
$$yq_n=\sum_{j\ge1}\binom{n}{2j}2^{2j-1}q_{n+1-2j}+q_{n+1}$$
and finally,
$$q_{n+1}=yq_n-\sum_{j\ge1}\binom{n}{2j}2^{2j-1}q_{n+1-2j}\ .$$

\end{subsection}

%%%%%%%%%%%%%%%%%%%%%%%%%%%%%%%%%%%%%%%%%%%%%%%%%%%
%%%%%%%%%%%%%%%%%%%%%%%%%%%%%%%%%%%%%%%%%%%%%%%%%%%
%%% Gegenbauer system %%%
\begin{section}{Gegenbauer system}

\begin{align*}
&V(z)=-\log(1-2\alpha z+z^2) && Z(v)=\alpha-\sqrt{\alpha^2 + e^{- v}-1}\\
&V'(z)=\frac{2\alpha-2z}{1-2\alpha z+z^2} && Z'(v)=\frac{e^{-v}}{2\sqrt{\alpha^2 + e^{- v}-1}}\\
\mathstrut\\
Y &= x\frac{1-2\alpha z+z^2}{2\alpha-2z} =x\frac{e^{-\mathcal{V}}}{2\sqrt{\alpha^2 + e^{- \mathcal{V}}-1}}
 &X &= 2ye^{v}\sqrt{\alpha^2 + e^{- v}-1}=y\frac{2\alpha-2\mathcal{Z}}{1-2\alpha \mathcal{Z}+\mathcal{Z}^2}\\
\mathstrut\\
x&=Y\frac{2\alpha-2z}{1-2\alpha z+z^2} =2Ye^{\mathcal{V}}\sqrt{\alpha^2 + e^{- \mathcal{V}}-1}   
&y&=X\frac{e^{-v}}{2\sqrt{\alpha^2 + e^{- v}-1}} =X\frac{1-2\alpha\mathcal{Z}+\mathcal{Z}^2}{2\alpha-2\mathcal{Z}}
\end{align*}
and the generating functions are
$$e^{vY}1 = e^{x\,Z(v)}=\sum_{n\ge0}\frac{v^n}{n!}\,p_n(x)$$
$$e^{zX}1 = e^{yV(z)}=(1-2\alpha z+z^2)^{-y}=\sum_{n\ge0}\frac{z^n}{n!}\,q_n(y)$$

\begin{proposition}\rm
$$p_n(x)=\sum_k S(n,k)(-1)^{n-k}(2\alpha^2)^{-k} p_k^{\rm Bessel}(\alpha x)$$
where $p_k^{\rm Bessel}$ are the Bessel polynomials $x\theta_{n-1}(x)$ as in the Hermite-Bessel system.
That is, these arise as Poisson subordination by Bessel, with scaling $x$ as well. And
$$q_n(x)=n!\,C_n^{(x)}(\alpha)$$
Gegenbauer polynomials in the variable $\alpha$ with parameter $x$. 
\end{proposition}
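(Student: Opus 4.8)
The plan is to treat the two assertions separately, since they have quite different characters. The formula for $q_n$ will follow almost immediately from the defining generating function together with the classical generating function for Gegenbauer polynomials, whereas the formula for $p_n$ requires exhibiting the $Z(v)$ of this system as an explicit Poisson subordination of the Bessel canonical function.

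For the $q$-polynomials I would start from the generating function recorded above,
$$e^{zX}1 = (1-2\alpha z+z^2)^{-y}=\sum_{n\ge0}\frac{z^n}{n!}\,q_n(y).$$
The classical generating function for Gegenbauer (ultraspherical) polynomials reads $(1-2\alpha t+t^2)^{-\lambda}=\sum_{n\ge0} C_n^{(\lambda)}(\alpha)\,t^n$, with $\alpha$ the argument and $\lambda$ the parameter. Setting $t=z$ and $\lambda=y$ and matching coefficients of $z^n$ gives $q_n(y)/n! = C_n^{(y)}(\alpha)$, which is the stated identity (the variable being renamed $x$ in the statement). There is essentially no obstacle here.

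For the $p$-polynomials I would use the Poisson subordination machinery of Section \S\ref{sec:PS}, taking $Z_1$ to be the Bessel canonical function $Z_1(v)=1-\sqrt{1-2v}$ of the Hermite-Bessel system and $Z_2(v)=e^v-1$, so that $p^{(2)}_{nk}=S(n,k)$. The crucial step is the algebraic identity
$$Z(v)=\alpha-\sqrt{\alpha^2+e^{-v}-1}=\alpha\,Z_1\bigl(\sigma(e^{\gamma v}-1)\bigr),\qquad \gamma=-1,\ \sigma=-\tfrac{1}{2\alpha^2},$$
which I would verify by factoring $\sqrt{\alpha^2+e^{-v}-1}=\alpha\sqrt{1+(e^{-v}-1)/\alpha^2}$ and solving $\alpha^2(1-2w)=\alpha^2+e^{-v}-1$ for $w=\sigma(e^{\gamma v}-1)$. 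The outer factor $\alpha$ then enters as a scaling of the spatial variable: since $e^{xZ(v)}=e^{(\alpha x)Z_1(\sigma(e^{\gamma v}-1))}$, the Composition Theorem, applied with this scaling $x\mapsto\alpha x$, yields
$$p_n(x)=\gamma^n\sum_k S(n,k)\,\sigma^k\,p_k^{\rm Bessel}(\alpha x).$$
Substituting $\gamma^n=(-1)^n$ and $\sigma^k=(-1)^k(2\alpha^2)^{-k}$ and using $(-1)^{n+k}=(-1)^{n-k}$ produces exactly the claimed expansion.

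The main obstacle is the subordination identity itself: one must recognize that the Gegenbauer $Z(v)$ decomposes through the Bessel $Z_1$ at all, and then pin down the correct scale factors $\gamma=-1$, $\sigma=-1/(2\alpha^2)$ together with the argument scaling $x\mapsto\alpha x$. Once this decomposition is in hand the remainder is routine bookkeeping of signs and a direct application of the Poisson subordination formula. A secondary consistency check, not needed for the proof, would be to confirm agreement of this $p_n$ formula with the operator expression $Y=x\,e^{-\mathcal{V}}/(2\sqrt{\alpha^2+e^{-\mathcal{V}}-1})$ recorded in the table.
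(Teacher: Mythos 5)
Your proposal is correct and follows essentially the same route as the paper: the paper's proof likewise verifies Poisson subordination with $Z_1(v)=1-\sqrt{1-2v}$, $\sigma=-(2\alpha^2)^{-1}$, $\gamma=-1$, and the scaling of the spatial argument by $\alpha$, checking that $\alpha\bigl(1-\sqrt{1-2\sigma(e^{-v}-1)}\,\bigr)$ reproduces $Z(v)=\alpha-\sqrt{\alpha^2+e^{-v}-1}$. Your explicit treatment of the $q$-polynomials via the classical Gegenbauer generating function, and your sign bookkeeping $\gamma^n\sigma^k=(-1)^{n-k}(2\alpha^2)^{-k}$, merely spell out steps the paper leaves implicit.
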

\begin{proof}
For the form of the $p$-polynomials, we verify Poisson subordination with $Z_1(v)=1-\sqrt{1-2v}$, $\sigma=-(2\alpha^2)^{-1}$, 
$\gamma=-1$. So we have, scaling by $\alpha$,
$$Z=\alpha\left(1-\sqrt{1-2\frac{-1}{2\alpha^2}(e^{-v}-1)}\ \right)$$
in agreement with $Z$ as given above.
\end{proof}

Selection of each:

\begin{align*}
p_0&=1\\
p_1&=\frac{x}{2 \alpha}\\
p_2&=x \left(- \frac{1}{2 \alpha} + \frac{1}{4 \alpha^{3}}\right) + \frac{x^{2}}{4 \alpha^{2}}\\
p_3&=x^{2} \left(- \frac{3}{4 \alpha^{2}} + \frac{3}{8 \alpha^{4}}\right) + x \left(\frac{1}{2 \alpha} - \frac{3}{4 \alpha^{3}} + \frac{3}{8 \alpha^{5}}\right) + \frac{x^{3}}{8 \alpha^{3}}\\
p_4&=x^{3} \left(- \frac{3}{4 \alpha^{3}} + \frac{3}{8 \alpha^{5}}\right) + x^{2} \left(\frac{7}{4 \alpha^{2}} - \frac{9}{4 \alpha^{4}} + \frac{15}{16 \alpha^{6}}\right) + x \left(- \frac{1}{2 \alpha} + \frac{7}{4 \alpha^{3}} - \frac{9}{4 \alpha^{5}} + \frac{15}{16 \alpha^{7}}\right)\\&\qquad + \frac{x^{4}}{16 \alpha^{4}}\\
p_5&=x^{4} \left(- \frac{5}{8 \alpha^{4}} + \frac{5}{16 \alpha^{6}}\right) + x^{3} \left(\frac{25}{8 \alpha^{3}} - \frac{15}{4 \alpha^{5}} + \frac{45}{32 \alpha^{7}}\right) + x^{2} \left(- \frac{15}{4 \alpha^{2}} + \frac{75}{8 \alpha^{4}} - \frac{75}{8 \alpha^{6}} + \frac{105}{32 \alpha^{8}}\right) \\
&\qquad+ x \left(\frac{1}{2 \alpha} - \frac{15}{4 \alpha^{3}} + \frac{75}{8 \alpha^{5}} - \frac{75}{8 \alpha^{7}} + \frac{105}{32 \alpha^{9}}\right) + \frac{x^{5}}{32 \alpha^{5}}
\end{align*}
\begin{align*}
q_0&=1\\
q_1&=2 \alpha y\\
q_2&=4 \alpha^{2} y^{2} + y \left(4 \alpha^{2} - 2\right)\\
q_3&=8 \alpha^{3} y^{3} + y^{2} \left(24 \alpha^{3} - 12 \alpha\right) + y \left(16 \alpha^{3} - 12 \alpha\right)\\
q_4&=16 \alpha^{4} y^{4} + y^{3} \left(96 \alpha^{4} - 48 \alpha^{2}\right) + y^{2} \left(176 \alpha^{4} - 144 \alpha^{2} + 12\right) + y \left(96 \alpha^{4} - 96 \alpha^{2} + 12\right)\\
q_5&=32 \alpha^{5} y^{5} + y^{4} \left(320 \alpha^{5} - 160 \alpha^{3}\right) + y^{3} \left(1120 \alpha^{5} - 960 \alpha^{3} + 120 \alpha\right)\\
&\qquad + y^{2} \left(1600 \alpha^{5} - 1760 \alpha^{3} + 360 \alpha\right) + y \left(768 \alpha^{5} - 960 \alpha^{3} + 240 \alpha\right)
\end{align*}

\begin{subsection}{Recurrence}
From 
$$y=X \frac{1-2\alpha \mathcal{Z}+\mathcal{Z}^2}{2\alpha-2\mathcal{Z}}$$
we have
$$y(2\alpha-2\mathcal{Z})=X(1-2\alpha \mathcal{Z}+\mathcal{Z}^2)$$
Applying this relation to $q_n$ yields
$$y(2\alpha q_n-2nq_{n-1})=q_{n+1}-2\alpha n q_n+n(n-1)q_{n-1}$$
and rearranging, we have
$$q_{n+1}=2\alpha(y+n)q_n-n(2y+n-1)q_{n-1}\ .$$
\end{subsection}
\end{section}

\begin{section}{Multivariate Case}
We indicate how the multivariate case works. For $N>0$, $V(z)=(V_1(z),\ldots,V_N(z))$, $z=(z_1,\ldots,z_N)$, with $Z(V(z))=z$, the functional
inverse. $V(0)=0\in \mathbb{C}^N$, holomorphic in a neighborhood of the origin. Now $V'(z)$ is the Jacobian 
$\displaystyle\bigl(\frac{\partial V_i}{\partial z_j}\bigr)$, with inverse matrix $W(z)$. The canonical variables are
$$Y_j=x_\lambda W_{\lambda j}(D)$$
with repeated Greek indices summed. One checks the commutation relations
\begin{align*}
[V_i(D),Y_j]&=\delta_{ij}\,I\\
[Y_i,Y_j]=0
\end{align*}
The canonical polynomials are
$$p_n(x)=Y_1^{n_1}\cdots Y_N^{n_N}1$$
with multi-index $n=(n_1,\ldots,n_N)$. The generating function is
$$e^{x\cdot Z(v)}=\sum_{n}\frac{v^n}{n!}\,p_n(x)$$
with $x\cdot Z(v)=\sum_j x_j Z_j(v)$, dot product, and the usual conventions for multi-indices.\\

We illustrate with a multivariate version of the Hermite-Bessel system.\\

\begin{example}
Let $V_j(z)=z_j-\frac{1}{2}\sum z_i^2$. Then we find $Z_j(v)=v_j+S(v)$ where
$$S(v)=\frac{1}{N}\left(1-\sum v_j-\sqrt{(1-\sum v_j)^2-N\sum v_j^2} \right)$$
The Jacobian is
$$V'=I-Z$$
where $Z$ is the rank one matrix with every row equal to $(z_1,\ldots,z_N)$. And
$$W=(V')^{-1}=I+\frac{1}{1-\sum z_i}\,Z$$
so
$$Y_j=x_j+({\textstyle \sum x_i})\frac{D_j}{1-\sum D_i}\ .$$

The $p$-polynomials are a family of multivariate Bessel polynomials. We can find the $q$-polynomials expressed as products of
Hermite polynomials. Note
\begin{align*}
y\cdot V(z)&=\sum y_j\left(z_j-\frac{1}{2}\,\sum z_i^2\right)\\
&=\sum y_jz_j-\frac{1}{2}\bigl({\textstyle\sum y_i}\bigr)\sum z_j^2\\
&=\sum\left[y_jz_j-\frac{1}{2}\bigl({\textstyle\sum y_i}\bigr)z_j^2\right]
\end{align*}
So
\begin{align*}
e^{y\cdot V}&=\prod_j \exp\left(y_jz_j-\frac{1}{2}({\textstyle\sum y_i}) z_j^2\right)\\
&=\prod_j\sum_{n_j} \frac{z_j^{n_j}}{n_j!}He_{n_j}(y_j,{\textstyle\sum y_i})\\
&=\sum_n\frac{z^n}{n!} \prod_{n=(n_1,\ldots,n_N)}He_{n_j}(y_j,{\textstyle\sum y_i})
\end{align*}
cf. equation \eqref{eq:He}.
\end{example}
\begin{subsection}{Linear maps and symmetric tensor powers}
An interesting special case arises when $V$ is a linear map. Let $A$ be a given $N\times N$ matrix. 
Form 
$$V(z)=(A^{-1})^Tz$$
with $T$ denoting transpose. We have $V'=(A^{-1})^T$ and $W=A^T$. So
$$Y_j=x_\lambda W_{\lambda j}=x_\lambda A_{j\lambda}=(Ax)_j$$
i.e. $Y=Ax$. \\

And $Z(v)=A^{T}v$, with $Z'=A^T$ and $X=A^{-1}y$. The generating function is
$$e^{x\cdot A^Tv}=e^{Ax\cdot v}=\sum_n\frac{v^n}{n!}\,\sum_m {\bar A}_{nm}x^m$$
That is,
$$Y^n1=(Ax)^n=\sum {\bar A}_{nm} x^m$$
For given homogeneous degree $d$, the map $A\to \bar A$ is a multiplicative map taking
the space of homogeneous polynomials of degree $d$ to itself. For a matrix group,
for each $d$, this gives a group representation, the action of the group on
polynomials, alternatively, action on the space of symmetrized tensors (of the underlying vector space), cf. \cite{Sp}.\\
\begin{subsubsection}{Homorphism property}
The composition theorem here becomes the property that the map $A\to \bar A$ is a homomorphism. Take two matrices $A$ and $B$ and form
$Z_1=B^Tv$, $Z_2=A^Tv$. Then
$$Z_1(Z_2(v))=B^TA^Tv=(AB)^Tv$$
And the exponential
$$e^{xZ_1(Z_2(v))}=\sum\frac{v^n}{n!}\sum {\bar A}_{nk}{\bar B}_{km} x^m=\sum\frac{v^n}{n!}\sum {\overline {AB}}_{nm} x^m$$
as expected.
\end{subsubsection}
\begin{subsubsection}{Lie algebra map}
We can also study the induced map at the Lie algebra level. Namely, define $\Gamma(A)$ to be the matrix satisfying
$$\overline{\exp(tA)}=\exp(t\Gamma(A))$$
As the generator of the image of the one-parameter group generated by $A$ we have
$$\Gamma(A)=\frac{d}{dt}\biggm|_{t=0}\overline{\exp(tA)}\ .$$
\begin{proposition}  The generating function for the matrices $\Gamma(A)$ is given by
$$(Ax\cdot v)e^{x\cdot v}=\sum_n\frac{v^n}{n!}\,\sum\Gamma(A)_{nm}x^m$$
\end{proposition}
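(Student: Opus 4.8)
The plan is to obtain the generating function for $\Gamma(A)$ by differentiating, at $t=0$, the generating function for the map $B\mapsto\bar B$ specialized to the one-parameter family $B=\exp(tA)$. The starting point is the identity established in the linear-map subsection, valid for an arbitrary $N\times N$ matrix $B$:
$$e^{Bx\cdot v}=\sum_n\frac{v^n}{n!}\sum_m\bar B_{nm}x^m.$$
First I would record that the homomorphism property $\overline{BC}=\bar B\,\bar C$ from the preceding subsubsection guarantees that $t\mapsto\overline{\exp(tA)}$ is a genuine one-parameter subgroup on the finite-dimensional space of homogeneous polynomials of each fixed degree $d$, so that its infinitesimal generator $\Gamma(A)=\frac{d}{dt}\big|_{t=0}\overline{\exp(tA)}$ exists and satisfies $\overline{\exp(tA)}=\exp(t\Gamma(A))$, exactly as in the definition.

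Next I would substitute $B=\exp(tA)$ into the displayed identity, giving
$$e^{\exp(tA)x\cdot v}=\sum_n\frac{v^n}{n!}\sum_m\overline{\exp(tA)}_{nm}x^m,$$
and then differentiate both sides with respect to $t$ and set $t=0$. On the right-hand side, differentiation produces $\frac{d}{dt}\big|_{t=0}\overline{\exp(tA)}_{nm}=\Gamma(A)_{nm}$ by definition, yielding $\sum_n\frac{v^n}{n!}\sum_m\Gamma(A)_{nm}x^m$, which is precisely the claimed right-hand side.

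For the left-hand side I would set $M(t)=\exp(tA)$ and apply the chain rule to $e^{M(t)x\cdot v}$. Since $M(0)=I$ and $\frac{d}{dt}\big|_{t=0}M(t)=A$, the exponent $M(t)x\cdot v$ takes the value $x\cdot v$ and has derivative $(Ax)\cdot v$ at $t=0$, so
$$\frac{d}{dt}\Big|_{t=0}e^{M(t)x\cdot v}=\bigl((Ax)\cdot v\bigr)\,e^{x\cdot v}.$$
Matching the two sides gives $(Ax\cdot v)\,e^{x\cdot v}=\sum_n\frac{v^n}{n!}\sum_m\Gamma(A)_{nm}x^m$, as required.

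The only point needing care — and the main obstacle — is justifying the term-by-term differentiation of the infinite expansion on the right-hand side. The cleanest route is to treat the identity as one between formal (or locally convergent) power series in $v$ and $x$ and to compare the coefficient of each monomial $v^nx^m$: with $B=\exp(tA)$ that coefficient is $\bar B_{nm}/n!$, a polynomial — hence smooth — function of $t$, so differentiating at $t=0$ is immediate and returns $\Gamma(A)_{nm}/n!$. The remaining ingredients are the elementary identity $\frac{d}{dt}\big|_{t=0}\exp(tA)=A$ and the chain rule applied to the exponent $M(t)x\cdot v$ on the left.
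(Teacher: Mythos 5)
Your proof is correct and follows essentially the same route as the paper, which likewise substitutes $e^{tA}$ into the identity $e^{Bx\cdot v}=\sum\frac{v^n}{n!}\sum\bar B_{nm}x^m$, differentiates at $t=0$, and matches the two sides; your extra care about term-by-term differentiation goes slightly beyond what the paper records but is in the same spirit. One small correction: the coefficients $\overline{\exp(tA)}_{nm}$ are homogeneous polynomials in the \emph{entries} of $\exp(tA)$, hence entire (not polynomial) functions of $t$, but smoothness at $t=0$ still follows, so your justification stands.
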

\begin{proof} Start with
$$\exp\bigl((e^{tA}x)\cdot v\bigr)=\sum\frac{v^n}{n!}\sum (\overline{e^{tA}})_{nm}x^m$$
Now differentiate with respect to $t$ and set $t=0$. We get
$$\bigl((Ae^{tA}x)\cdot v\bigr)\exp((e^{tA}x)\cdot v)\bigm|_{t=0}=(Ax\cdot v)e^{x\cdot v}=\sum\frac{v^n}{n!}\sum\Gamma(A)_{nm}x^m$$
as stated.
\end{proof}
This map plays an important part in certain multivariate orthogonal systems, see, e.g., \cite{F}.
\end{subsubsection}
\end{subsection}
\end{section}

\begin{section}{Concluding remarks}
One can work in the somewhat more general setting of Sheffer sequences. From our perspective a
natural way to approach these systems is as canonical systems evolving under the action of a one-parameter group.
Let $H(D)$ generate a one-parameter group acting on the polynomials $\{p_n\}$. We have
$$e^{tH(D)}e^{xZ(v)}=e^{xZ(v)+tH(Z(v))}=\sum\frac{v^n}{n!}\bigl(e^{tH(D)}p_n(x)\bigr)=\sum\frac{v^n}{n!}p_n(x,t)\ .$$
The relation
$$e^{tH(D)}xe^{-tH(D)}=x+tH'(D)$$ yields
the raising operator for the new system
$$\mathcal{R}=e^{tH(D)}Ye^{-tH(D)}=(x+tH'(D))W(D)=Y+tH'(D)W(D)$$
i.e. $\mathcal{R}p_n(x,t)=p_{n+1}(x,t)$ with $\mathcal{V}=V(D)$ remaining as the lowering operator, $[\mathcal{V},\mathcal{R}]=I$.
These systems play a principal r\^ole in \cite{R,SGWW}.
\end{section}

\end{document}